\newtheorem{theorem}{Theorem}[section]
\newtheorem{corollary}[theorem]{Corollary}
\newtheorem{proposition}[theorem]{Proposition}
\newtheorem{remark}{Remark}[section]
\newtheorem{definition}{Definition}[section]
\begin{document}
\title[D'Atri spaces of type $k$]{D'Atri spaces of type $k$ and related classes of geometries concerning Jacobi operators}
\author{Teresa Arias-Marco}
\address{Departamento de Matem\'{a}ticas, Universidad de Extremadura, 06006 Badajoz,
Spain }
\email{ariasmarco@unex.es}
\author{Maria J. Druetta}
\address{CIEM - FaMAF, Universidad Nacional de C\'{o}rdoba, 5000 C\'{o}rdoba,
Argentina}
\email{druetta@famaf.unc.edu.ar}
\keywords{D'Atri spaces, $k$-D'Atri spaces, Ledger's recursion formula, curvature
invariants, geodesic spheres, shape operator, $k$-th elementary symmetric
functions, spaces of Iwasawa type, $\frak{SC}$-spaces}
\subjclass[2010]{53C21, 53C25, 53C30, 22E99}
\thanks{The authors were partially supported by CONICET, FONCyT and SECyT (UNC). The
first author's work has also been supported by D.G.I.~(Spain) and FEDER
Project MTM2010-15444, by Junta de Extremadura and FEDER funds, by DFG
Sonderforschungsbereich~647 and the program ``Becas Iberoam\'{e}rica.
J\'{o}venes Profesores e Investigadores. Santander Universidades'' of
Santander Bank.}

\maketitle

\begin{abstract}
In this article we continue the study of the geometry of $k$-D'Atri spaces, $%
1\leq k$ $\leq n-1$ ($n$ denotes the dimension of the manifold)$,$ began by
the second author. It is known that $k$-D'Atri spaces, $k\geq 1,$ are
related to properties of Jacobi operators $R_{v}$ along geodesics, since she has shown that
${\operatorname{tr}}R_{v}$, ${\operatorname{tr}}R_{v}^{2}$ are invariant under
the geodesic flow for any unit tangent vector $v$. Here, assuming that the
Riemannian manifold is a D'Atri space, we prove in our main result that ${%
\operatorname{tr}}R_{v}^{3}$ is also invariant under the geodesic flow if $%
k\geq 3$. In addition, other properties of Jacobi operators related to the
Ledger conditions are obtained and they are used to give applications to
Iwasawa type spaces. In the class of D'Atri spaces of Iwasawa type, we show
two different characterizations of the symmetric spaces of noncompact type:
they are exactly the $\frak{C}$-spaces and on the other hand they are $k$%
-D'Atri spaces for some $k\geq 3.$ In the last case, they are $k$-D'Atri for
all $k=1,...,n-1$ as well. In particular, Damek-Ricci spaces that are $k$%
-D'Atri for some $k\geq 3$ are symmetric.

Finally, we characterize $k$-D'Atri spaces for all $k=1,...,n-1$ as the $%
\frak{SC}$-spaces (geodesic symmetries preserve the principal curvatures of
small geodesic spheres). Moreover, applying this result in the case of $4$%
-dimensional homogeneous spaces we prove that the properties of being a
D'Atri ($1$-D'Atri) space, or a $3$-D'Atri space, are equivalent to the
property of being a $k$-D'Atri space for all $k=1,2,3$.
\end{abstract}

\section{Introduction and Preliminaries}\label{sec:intro}

Let $M$ be a $n$-dimensional Riemannian manifold, $\nabla $ the Levi Civita
connection and let $R$ denote the associated curvature tensor defined by $%
R(u,v)=[\nabla _{u},\nabla _{v}]-\nabla _{[u,v]}$ for all $u,v\in TM.$ If
$\left| v\right| =1$, the Jacobi operators $R_{v}$ are defined by $R_{v}w=R(w,v)v.$

Let $m\in M$ be a fixed point and $v\in T_{m}M,$ $\left| v\right| =1;$ we
denote by $\gamma _{v}(t)$ the geodesic in $M$ with $\gamma _{v}(0)=m\,$ and
$\gamma _{v}^{\prime }(0)=v.$ Note that $\exp _{m}tv=\gamma _{v}(t)$
whenever $\exp _{m}$, the geometric exponential map of $M$, is defined.
Moreover, for each small $t>0,$ we denote by $S_{v}(t)$ the shape operator
(with respect to the outward unit normal field $\gamma _{v}^{\prime }(t)$)
of the geodesic sphere
\begin{equation*}
G_{m}(t)=\{\gamma _{w}(t)=\exp _{m}(tw):w\in T_{m}M,\,\left| w\right| =1\}
\end{equation*}
at $\gamma _{v}(t).$ By definition, for each $m\in M$ the geodesic
symmetries $s_{m}$ are locally defined by
\begin{equation*}
s_{m}=\exp _{m}\circ \sigma _{0}\circ \exp _{m}^{-1},\text{ where }\sigma
_{0}=-\text{Id.}
\end{equation*}
Equivalently, $s_{m}(p)=\exp _{m}(-\exp _{m}^{-1}(p))\,$ for all $p\in M$
where $\exp _{m}$ is locally defined as a diffeomorphism, or $s_{m}(\gamma
_{v}(t))=\gamma _{v}(-t)$ for all real $t\sim 0$.

D'Atri spaces were introduced by J.~E.~D'Atri and H.~K.~Nickerson in \cite
{D'A-N.69}. $M$ is called a \emph{D'Atri space} if the local geodesic
symmetries are volume-preserving (i.e. they preserve the volume element up
to a sign). An equivalent definition is given by the condition that the
geodesic symmetries preserve the mean curvature of small geodesic spheres;
that is ${\operatorname{tr}}S_{v}(t)={\operatorname{tr}}S_{-v}(t)$.
Obviously, D'Atri spaces are a natural generalization of locally symmetric
spaces (where the local geodesic symmetries are isometries) and in dimension
two are locally symmetric, so they have constant sectional curvature. The
third dimensional classification was done by O.~Kowalski in \cite{Ko.83}
where he proved that all of them are either locally symmetric or locally
isometric to a naturally reductive space. See \cite{KPV} for references
about D'Atri spaces and related topics.

Many characterizations of D'Atri spaces exist but the most relevant for our
work was proved by J.~E.~D'Atri and H.~K.~Nickerson \cite{D'A-N.69} and it
was improved by Z.~I.~Szab\'{o}~\cite{Sz.93}; namely, $M$ is a D'Atri space
if and only if it satisfies the series of all odd Ledger conditions~$
L_{2k+1}=0$, $k\ge 1$. The \emph{Ledger conditions} are an infinite series
of curvature conditions derived from the so-called Ledger recurrence
formula, which nowadays, have become of a special and important relevance
(see \cite{P-T}, \cite{AM09}). For example, Z.~I.~Szab\'{o}~\cite{Sz.93}
proved that $L_{3}=0$ implies that the manifold is real analytic. Moreover,
the first author and O. Kowalski \cite{AM-Ko.08} classified the $4$-dimensional homogeneous Riemannian manifolds which satisfy $L_{3}=0$ and
used this result to classify the $4$-dimensional homogeneous D'Atri spaces,
as well (see also \cite{AM}, \cite{AM.07}).

In Section \ref{sec:Ledger} of this work we study properties of the Jacobi
operators along geodesics related to Ledger's conditions as $L_{3}=0,$
$L_{5}=0,$ $L_{7}=0$, which play an important role to prove two of our main
results, Theorem \ref{th:k-D'A property} and Theorem \ref{th:Ledger+Iwasawa}, developed in Section \ref{sec:k-DAtri} and Section \ref{sec:kDatri and
Iwasawa}, respectively.

$M$ is called a D'Atri space of type $k$ or a $k$-D'Atri space, $1\leq k\leq
n-1,$ if the geodesic symmetries preserve the $k$-th elementary symmetric
functions of the eigenvalues of the shape operators of all small geodesic
spheres. Recall, that the $k$-th elementary symmetric functions $\sigma
_{k}, $ $k=1,...,n,$ of the eigenvalues of a symmetric endomorphism $A$ on a
$n$-dimensional real vector space are determined by its characteristic
polynomial as follows,
\begin{equation*}
\text{det}(\lambda I-A)=\lambda ^{n}-\sigma _{1}(A)\lambda
^{n-1}+....+(-1)^{k}\sigma _{k}(A)\lambda ^{n-k}+...+(-1)^{n}\sigma _{n}(A),
\end{equation*}
\begin{equation*}
\text{\thinspace }\sigma _{k}(A)=\sum_{i_{1}<i_{2}<....<i_{k}}\lambda
_{i_{1}}(A)\cdot \cdot \cdot \lambda _{i_{k}}(A)
\end{equation*}
with $1\leq i_{1}<i_{2}<...<i_{k}\leq n$ and $\{\lambda _{1}(A),...,\lambda
_{n}(A)\}$ the set of $n$ eigenvalues of $A.$ Thus, $M\,$\thinspace
\thinspace is a $k$-D'Atri space if and only if for each small $r>0$
\begin{equation*}
\sigma _{k}(S_{v}(r))=\sigma _{k}(S_{-v}(r))\text{ for all unit vector }v\in
T_{m}M,
\end{equation*}
where $S_{\pm v}(r)$ denotes the shape operators of $G_{m}(r)$ at the points
$\exp _{m}r(\pm v)$. Therefore, the $1$-D'Atri property is obviously the
D'Atri condition.

D'Atri space of type $k$ definitions were introduced by O. Kowalski, F.
Pr\"{u}fer and L. Vanhecke in \cite{KPV} as a natural analogues of the
concept of D'Atri space and it was started as open problem to analyze if all
these analog notions are equivalent or not. The first attempt to solve this
problem has been done by the second author in \cite{D10}, where it was shown
that the notions of D'Atri spaces ($1$-D'Atri) and $2$-D'Atri spaces are
equivalent. Now, we continue such study in Section \ref{sec:kDatri and
Iwasawa} and Section \ref{sec:GCandk-D'Atri} considering Iwasawa type spaces
and 4-dimensional homogeneous spaces, respectively.

Besides, it is also shown in \cite{D10} that $k$-D'Atri spaces, $k\geq 1,$
are related to properties of Jacobi operators as the invariance under the
geodesic flow of ${\operatorname{tr}}R_{v}$ and ${\operatorname{tr}}
R_{v}^{2},$ respectively. In Section \ref{sec:k-DAtri} we complete this fact
(see Proposition \ref{pro:2.2D10} and Proposition \ref{pro:correction}) and obtain
the same result for ${\operatorname{tr}}R_{v}^{3}$ in Theorem \ref{th:k-D'A
property}, under the assumption that $M$ is also a D'Atri space. Note that
throughout the paper we can assume $n\geq 3$ and $k\geq 2$.

One of the consequences of Theorem \ref{th:k-D'A property} is obtained in
Section \ref{sec:kDatri and Iwasawa}, considering spaces of Iwasawa type,
where the symmetric spaces are characterized as D'Atri spaces which are $k$
-D'Atri for some $k\geq 3$. Some properties of the $k$-D'Atri condition $
(k\geq 1)$ in the class of Iwasawa type spaces have been study in \cite{D09}
and \cite{D10}. In particular, the symmetric ones were characterized as the $
k$-D'Atri spaces for all $k=1,...,n-1.$ Here, we continue such study
proving that every D'Atri space satisfying that ${\operatorname{tr}}
R_{v}^{3} $ is invariant under the geodesic flow is a symmetric space.
Moreover, we also get that D'Atri spaces of Iwasawa type which are also ${
\frak{C}}$-spaces are symmetric.

$\mathfrak{C}$-spaces were introduced by J.~Berndt and L.Vanhecke in \cite
{B-V.92}. By definition, $M$ is a \emph{$\mathfrak{C}$-space} if for each
geodesic $\gamma ,$ the eigenvalues of $R_{\gamma _{v}^{\prime }(t)}$ are
constant along $\gamma _{v}(t)$. For locally symmetric spaces this is always
the case, so $\mathfrak{C}$-spaces are another natural generalization of
locally symmetric spaces. In the last section we describe $\frak{C}$-spaces
as those whose geodesic symmetries preserve the eigenvalues of Jacobi
operators (Proposition \ref{pro:C-cha}).
In the case of Iwasawa type spaces of rank one it was shown in \cite{D01}
that $\frak{C}$-spaces are symmetric. Moreover, Damek-Ricci spaces are rank one
spaces of Iwasawa type and the non-symmetric ones were the first examples
of D'Atri spaces which are not $\mathfrak{C}$-spaces~\cite{B-Tr-V}. However,
it is an open question whether a $\mathfrak{C}$-space is a D'Atri space.

In Section \ref{sec:GCandk-D'Atri} we will characterize the $k$-D'Atri
spaces for all $k\geq 1$ as the ${\frak{SC}}$-spaces (Theorem \ref{th:k-D'A
is GC}). Thus, we complete Theorem 2.6 of \cite{D10} where it was proved
that $k$-D'Atri spaces for all $k=1,...,n-1$ are ${\frak{C}}$-spaces. $M$
is a ${\frak{SC}}$\emph{-space} if the geodesic symmetries $s_{m}$ preserve
the eigenvalues (also called principal curvatures) of the shape operators $S_{v}(t)$ of small geodesic spheres centered at $m$ for all $m\in M.$ See
\cite{B-P-V} to know more about this kind of spaces. Then, as a consequence
of this characterization, we prove that the $3$-D'Atri condition is also an
equivalent notion to the D'Atri and $2$-D'Atri conditions for the $4$-dimensional homogeneous case.

\section{Ledger's conditions and properties of Jacobi operators}

\label{sec:Ledger}

Let $M$ be a Riemannian manifold and $m\in M$ be a fix point. Let $v\in
T_{m}M$ be a unit vector and consider a small real $r>0.$ If $M$ is real
analytic, then it is well known that the endomorphism $C_{v}(r)=rS_{v}(r)=
\sum_{k=0}^{\infty }\alpha _{k}(v)r^{k}$ with $\alpha _{k}(v)=\frac{1}{k!}
C_{v}^{(k)}(0)$, gives the power series expansion of $C_{v}(r)$ at $r=0,$
where
\begin{equation*}
C_{v}^{(k)}(0)=\frac{D^{k}}{dr^{k}}\left. C_{v}(r)\right| _{r=0}\,,\text{ \ }
k\geq 1,
\end{equation*}
may be computed by using the recursion formula of Ledger that is given by
\begin{equation}
(k+1)C_{v}^{(k)}(0)=-k(k-1)R_{v}^{(k-2)}-\sum_{l=2}^{k-2}\tbinom{k}{l}
C_{v}^{(l)}(0)C_{v}^{(k-l)}(0)\text{ for }k\geq 2,  \label{eq:Ledger}
\end{equation}
with $C_{v}(0)={\operatorname{Id}}$, $C_{v}^{\prime }(0)=0$ (see \cite{Be,
CV, KPV}). Here we use the notation $R_{v}=R_{v}(0)$ and $
R_{v}^{(k)}=R_{v}^{(k)}(0)$, $k\geq 1,$ the $k$-th covariant derivative of
the tensor $R_{\gamma _{v}^{^{\prime }}(t)}$ along $\gamma _{v}$ at $t=0.$
Then,
\begin{eqnarray*}
R_{v}^{\prime } &=&R_{v}^{\prime }(0)=\left. \left( \nabla _{\gamma
_{v}^{\prime }(t)}R_{\gamma _{v}^{\prime }(t)}\right) \right|
_{t=0}\,\,\,\,\,\,\,\,\,\text{and} \\
R_{v}^{(k)} &=&R_{v}^{(k)}(0)=\left. \left( \nabla _{\gamma _{v}^{\prime
}(t)}R_{\gamma _{v}^{\prime }(t)}^{(k-1)}\right) \right| _{t=0}\text{ for
all }k\geq 2.
\end{eqnarray*}
Thus, from formula \eqref{eq:Ledger} we have
\begin{equation}
\begin{split}
C_{v}(0)=& \,{\operatorname{Id}},\quad C_{v}^{\prime }(0)=0,\quad
C_{v}^{\prime \prime }(0)=-\tfrac{2}{3}R_{v},\quad C_{v}^{(3)}(0)=-\tfrac{3}{
2}R_{v}^{\prime }, \\
C_{v}^{(4)}(0)=& -\tfrac{4}{5}\left( 3R_{v}^{\prime \prime }+\tfrac{2}{3}
R_{v}\circ R_{v}\right) , \\
C_{v}^{(5)}(0)=& -\tfrac{5}{3}\left( 2R_{v}^{(3)}+R_{v}^{\prime }\circ
R_{v}+R_{v}\circ R_{v}^{\prime }\right) , \\
C_{v}^{(6)}(0)=& -\tfrac{3}{7}\left( 10R_{v}^{(4)}+8R_{v}\circ R_{v}^{\prime
\prime }+8R_{v}^{\prime \prime }\circ R_{v}+15R_{v}^{\prime }\circ
R_{v}^{\prime }+\tfrac{32}{9}R_{v}\circ R_{v}\circ R_{v}\right) , \\
C_{v}^{(7)}(0)=& -\tfrac{7}{12}\left( 9R_{v}^{(5)}+10R_{v}\circ
R_{v}^{(3)}+10R_{v}^{(3)}\circ R_{v}+27R_{v}^{\prime }\circ R_{v}^{\prime
\prime }+27R_{v}^{\prime \prime }\circ R_{v}^{\prime }\right. \\
& \left. \quad \quad \quad +11R_{v}\circ R_{v}\circ R_{v}^{\prime
}+11R_{v}^{\prime }\circ R_{v}\circ R_{v}+10R_{v}\circ R_{v}^{\prime }\circ
R_{v}\right).
\end{split}
\label{eq:opCparticular}
\end{equation}
On the other hand, the Ledger conditions are defined in terms of $
C_{v}^{(k)}(0)$, $k\geq 1$, and the well-known characterization of D'Atri
spaces is given using those conditions of odd order. In the previous context
we have,

\begin{definition}
\label{def:Ledger} If for each unit vector $v\in T_{m}M$
\begin{equation*}
L_{k}={\operatorname{tr}}C_{v}^{(k)}(0)=\frac{d^{k}}{dr^{k}}\left. {
\operatorname{tr}}C_{v}(r)\right| _{r=0}\,,\,k\geq 1,
\end{equation*}
then $L_{2k+1}=0$ and $L_{2k}=\,c_{2k}\,$, $k\geq 1,$ define the Ledger
conditions (associated to $v$) of odd order and even order, respectively, at
the point $m$.
\end{definition}

\begin{remark}
\label{rem:D'A iff Ledger} It is well-known that M is a D'Atri space if and
only if the infinite series of Ledger conditions of odd order are satisfied.
\end{remark}

In the rest of this section, we will show how to use the three first odd
Ledger conditions to obtain some useful identities.

\begin{remark}
\label{rem:Der1}If $v\in T_{m}M$ is a unit vector, then for any geodesic $
\gamma _{v}(t)$
\begin{equation*}
\frac{d}{dt}\left( {\operatorname{tr}}(R_{\gamma _{v}^{\prime
}(t)}^{(k)}\circ R_{\gamma _{v}^{\prime }(t)}^{(k)})\right) =2{\operatorname{tr}}\left( R_{\gamma _{v}^{\prime }(t)}^{(k)}\circ R_{\gamma
_{v}^{\prime }(t)}^{(k+1)}\right) \text{ for all }k\geq 0.
\end{equation*}
\end{remark}

In fact, for any orthonormal parallel basis $\{e_{i}(t)\}_{i=1}^{n}$ along
$\gamma _{v}(t)$ with $e_{n}(t)=\gamma _{v}^{\prime }(t)$, using that all
operators $R_{\gamma _{v}^{\prime}(t)}^{(k)}$ are symmetric, we have
\begin{equation*}
\begin{split}
\frac{d}{dt}\left( {\operatorname{tr}}\left( R_{\gamma _{v}^{\prime
}(t)}^{(k)}\right) ^{2}\right) =& \sum_{i=1}^{n-1}\frac{d}{dt}\left\langle
\left( R_{\gamma _{v}^{\prime}(t)}^{(k)}\right)
^{2}e_{i}(t),e_{i}(t)\right\rangle \\
=& \sum_{i=1}^{n-1}\frac{d}{dt}\left\langle R_{\gamma _{v}^{\prime
}(t)}^{(k)}e_{i}(t),R_{\gamma _{v}^{\prime}(t)}^{(k)}e_{i}(t)\right\rangle \\
=& 2\sum_{i=1}^{n-1}\left\langle \nabla _{\gamma _{v}^{\prime }(t)}\left(
R_{\gamma _{v}^{\prime }(t)}^{(k)}e_{i}(t)\right) ,R_{\gamma _{v}^{\prime
}(t)}^{(k)}e_{i}(t)\right\rangle \\
=& 2\sum_{i=1}^{n-1}\left\langle \left( \nabla _{\gamma _{v}^{\prime
}(t)}R_{\gamma _{v}^{\prime }(t)}^{(k)}\right) e_{i}(t),R_{\gamma
_{v}^{\prime }(t)}^{(k)}e_{i}(t)\right\rangle \\
=& 2\sum_{i=1}^{n-1}\left\langle R_{\gamma _{v}^{\prime
}(t)}^{(k+1)}e_{i}(t),R_{\gamma _{v}^{\prime
}(t)}^{(k)}e_{i}(t)\right\rangle =2{\operatorname{tr}}\left( R_{\gamma
_{v}^{\prime }(t)}^{(k)}\circ R_{\gamma _{v}(t)}^{(k+1)}\right) .
\end{split}
\end{equation*}

\begin{proposition}
\label{pro:L5} If $L_{3}=0$, then

\begin{itemize}
\item[(i)]  ${\operatorname{tr}}R_{v}^{(k)}=0$ for all $k\geq 1$.
Consequently, ${\operatorname{tr}}R_{\gamma _{v}^{\prime}(t)}^{(k)}=0$
along $\gamma _{v}(t)$ and ${\operatorname{tr}}(R_{v}^{(k-1)})$ is invariant
under the geodesic (local) flow for all $k\geq 1$.

\item[(ii)]  The second odd Ledger condition $L_{5}=0$ becomes
\begin{equation}
{\operatorname{tr}}(R_{v}\circ R_{v}^{\prime })=0\text{ for all unit vector }
v\in T_{m}M.  \label{eq:L5}
\end{equation}
Equivalently, ${\operatorname{tr}}(R_{v}^{2})$ is invariant under the
geodesic (local) flow.
\end{itemize}
\end{proposition}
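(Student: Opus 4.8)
The plan is to derive both statements from the explicit formulas in \eqref{eq:opCparticular}, combined with the fact that the Ledger conditions hold pointwise for every unit vector, and with the differentiation rule underlying Remark~\ref{rem:Der1}.

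For part~(i), I would first note that $C_v^{(3)}(0)=-\tfrac{3}{2}R_v'$, so the hypothesis $L_3=\tr C_v^{(3)}(0)=0$ is precisely $\tr R_v^{(1)}=\tr R_v'=0$, valid at every point and for every unit vector. The computation in Remark~\ref{rem:Der1}, carried out without squaring, gives for a parallel orthonormal frame $\{e_i(t)\}$ along $\gamma_v$ the rule
\[
\frac{d}{dt}\tr R_{\gamma_v'(t)}^{(k)}=\sum_{i=1}^{n-1}\left\langle\left(\nabla_{\gamma_v'(t)}R_{\gamma_v'(t)}^{(k)}\right)e_i(t),e_i(t)\right\rangle=\tr R_{\gamma_v'(t)}^{(k+1)}.
\]
Since $\gamma_v'(t)$ is a unit vector at $\gamma_v(t)$ and the geodesic issuing from $\gamma_v(t)$ with velocity $\gamma_v'(t)$ is $\gamma_v$ itself, applying the hypothesis at each point of $\gamma_v$ gives $\tr R_{\gamma_v'(t)}^{(1)}\equiv0$. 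Differentiating this identically vanishing function and invoking the displayed rule yields $\tr R_{\gamma_v'(t)}^{(2)}\equiv0$, and an immediate induction on $k$ gives $\tr R_{\gamma_v'(t)}^{(k)}\equiv0$ for all $k\geq1$; evaluating at $t=0$ proves the assertion for $R_v^{(k)}$, and the general $t$ is the stated consequence along $\gamma_v$.

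For part~(ii), I would take the trace of the formula for $C_v^{(5)}(0)$ in \eqref{eq:opCparticular}, obtaining
\[
L_5=\tr C_v^{(5)}(0)=-\tfrac{5}{3}\left(2\tr R_v^{(3)}+\tr(R_v'\circ R_v)+\tr(R_v\circ R_v')\right).
\]
Part~(i) kills the term $\tr R_v^{(3)}$, while cyclicity of the trace gives $\tr(R_v'\circ R_v)=\tr(R_v\circ R_v')$; hence $L_5=-\tfrac{10}{3}\tr(R_v\circ R_v')$, so that $L_5=0$ is equivalent to \eqref{eq:L5}. The remaining equivalence is then read off from Remark~\ref{rem:Der1} with $k=0$, namely $\frac{d}{dt}\tr(R_{\gamma_v'(t)}^2)=2\tr(R_{\gamma_v'(t)}\circ R_{\gamma_v'(t)}')$: the quantity $\tr(R_v^2)$ is constant along $\gamma_v$ exactly when its derivative vanishes identically, i.e.\ when \eqref{eq:L5} holds at every point of the geodesic.

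The step demanding the most care is the transition from the pointwise identity $\tr R_w'=0$ to its validity $\tr R_{\gamma_v'(t)}^{(1)}\equiv0$ all along $\gamma_v$: one must check that the operator $R_{\gamma_v'(t)}^{(1)}$ produced by covariant differentiation along $\gamma_v$ coincides with the operator ``$R_w'$'' associated to the unit vector $w=\gamma_v'(t)$ based at $\gamma_v(t)$, which rests on the reparametrization property of geodesics. Once this identification is secured, the iterated differentiation in~(i) and the trace manipulations in~(ii) are entirely routine.
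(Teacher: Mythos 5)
Your proposal is correct and follows essentially the same route as the paper: for (ii) you take the trace of the formula for $C_{v}^{(5)}(0)$ in \eqref{eq:opCparticular}, use (i) to kill $\tr R_{v}^{(3)}$, and then obtain the geodesic-flow invariance from Remark~\ref{rem:Der1} via the reparametrization $\widetilde{\gamma}_{v}(s)=\gamma_{v}(t+s)$, exactly as in the paper. The only difference is that the paper disposes of (i) by citing \cite[Proposition 2.2 (ii)]{D10}, whereas you supply the (standard and correct) direct argument via $\frac{d}{dt}\tr R_{\gamma_{v}^{\prime}(t)}^{(k)}=\tr R_{\gamma_{v}^{\prime}(t)}^{(k+1)}$ and induction, including the necessary identification of $R_{\gamma_{v}^{\prime}(t)}^{(1)}$ with the operator $R_{w}^{\prime}$ for $w=\gamma_{v}^{\prime}(t)$.
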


\begin{proof}
It is contained in the proof of \cite[Proposition 2.2]{D10}
having into account that we use the facts $L_{3}={\operatorname{tr}}
R_{v}^{\prime }=0$ and $L_{5}={\operatorname{tr}}(R_{v}\circ R_{v}^{\prime
})=0$ (not their proofs). See (ii) and the last part of (iii) in the proof
of such proposition. Note that from \eqref{eq:opCparticular} we express
\begin{equation*}
L_{5}=-\tfrac{5}{3}{\operatorname{tr}}(2R_{v}^{(3)}+R_{v}^{\prime }\circ
R_{v}+R_{v}\circ R_{v}^{\prime })=-\tfrac{10}{3}\left( {\operatorname{tr}}
\left( R_{v}\circ R_{v}^{\prime }\right) \right).
\end{equation*}
\end{proof}

\begin{proposition}
\label{pro:Der2} If $v\in T_{m}M$ is a unit vector, then for any geodesic $\gamma _{v}(t)$
\begin{equation*}
\frac{d}{dt}\left( {\operatorname{tr}}\left( R_{\gamma _{v}^{\prime}(t)}^{(k)}\circ R_{\gamma _{v}^{\prime}(t)}\right) \right) ={\operatorname{tr}}\left( R_{\gamma _{v}^{\prime }(t)}^{(k+1)}\circ R_{\gamma
_{v}^{\prime }(t)}+R_{\gamma _{v}^{\prime }(t)}^{(k)}\circ R_{\gamma
_{v}^{\prime }(t)}^{\prime }\right) ,\;k\geq 1.
\end{equation*}
\end{proposition}

\begin{proof} \ Let $\{e_{i}(t)\}_{i=1}^{n}$ an orthonormal parallel
basis along $\gamma _{v}(t)$ with $e_{n}(t)=\gamma _{v}^{\prime }(t)$. We
compute
\begin{equation*}
\begin{split}
& \frac{d}{dt}\left( {\operatorname{tr}}\left( R_{\gamma _{v}^{\prime}(t)}^{(k)}\circ R_{\gamma _{v}^{\prime}(t)}\right) \right)
=\sum_{i=1}^{n-1}\frac{d}{dt}\left\langle \left( R_{\gamma _{v}^{\prime}(t)}^{(k)}\circ R_{\gamma _{v}^{\prime}(t)}\right)
e_{i}(t),e_{i}(t)\right\rangle \\
& =\sum_{i=1}^{n-1}\frac{d}{dt}\left\langle R_{\gamma _{v}^{\prime}(t)}e_{i}(t),R_{\gamma _{v}^{\prime}(t)}^{(k)}e_{i}(t)\right\rangle
\end{split}
\end{equation*}
\begin{equation*}
\begin{split}
& =\sum_{i=1}^{n-1}\left\{ \left\langle \nabla _{\gamma _{v}^{\prime
}(t)}\left( R_{\gamma _{v}^{\prime }(t)}e_{i}(t)\right) ,R_{\gamma
_{v}^{\prime }(t)}^{(k)}e_{i}(t)\right\rangle +\left\langle R_{\gamma
_{v}^{\prime }(t)}e_{i}(t),\nabla _{\gamma _{v}^{\prime }(t)}\left(
R_{\gamma _{v}^{\prime }(t)}^{(k)}e_{i}(t)\right) \right\rangle \right\} \\
& =\sum_{i=1}^{n-1}\left\{ \left\langle \left( \nabla _{\gamma _{v}^{\prime
}(t)}R_{\gamma _{v}^{\prime }(t)}\right) e_{i}(t),R_{\gamma _{v}^{\prime
}(t)}^{(k)}e_{i}(t)\right\rangle +\left\langle R_{\gamma _{v}^{\prime
}(t)}e_{i}(t),\left( \nabla _{\gamma _{v}^{\prime }(t)}R_{\gamma
_{v}^{\prime }(t)}^{(k)}\right) e_{i}(t)\right\rangle \right\} \\
& =\sum_{i=1}^{n-1}\left\{ \left\langle R_{\gamma _{v}^{\prime }(t)}^{\prime
}e_{i}(t),R_{\gamma _{v}^{\prime }(t)}^{(k)}e_{i}(t)\right\rangle
+\left\langle R_{\gamma _{v}^{\prime }(t)}e_{i}(t),R_{\gamma _{v}^{\prime
}(t)}^{(k+1)}e_{i}(t)\right\rangle \right\} \\
& =\sum_{i=1}^{n-1}\left\{ \left\langle \left( R_{\gamma _{v}^{\prime
}(t)}^{(k)}\circ R_{\gamma _{v}^{\prime }(t)}^{\prime }\right)
e_{i}(t),e_{i}(t)\right\rangle +\left\langle \left( R_{\gamma _{v}^{\prime
}(t)}^{(k+1)}\circ R_{\gamma _{v}^{\prime }(t)}\right)
e_{i}(t),e_{i}(t)\right\rangle \right\} \\
& ={\operatorname{tr}}\left( R_{\gamma _{v}^{\prime }(t)}^{(k+1)}\circ
R_{\gamma _{v}^{\prime }(t)}+R_{\gamma _{v}^{\prime }(t)}^{(k)}\circ
R_{\gamma _{v}^{\prime }(t)}^{\prime }\right).
\end{split}
\end{equation*}
\end{proof}

\begin{proposition}
\label{pro:L7} If $L_{3}=0$ and $L_{5}=0$, then the third odd Ledger's
condition $L_{7}=0$ becomes
\begin{equation}
16{\operatorname{tr}}(R_{v}^{\prime }\circ R_{v}^{2})-3{\operatorname{tr}}
(R_{v}^{\prime }\circ R_{v}^{\prime \prime })=0\text{ for all unit vector }
v\in T_{m}M.  \label{eq:L7}
\end{equation}
Equivalently, $L_{7}=0$ if and only if ${\operatorname{tr}}
(32R_{v}^{3}-9R_{v}^{\prime }\circ R_{v}^{\prime })$ is invariant under the
geodesic flow.
\end{proposition}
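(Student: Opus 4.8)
The plan is to compute $L_7=\tr C_v^{(7)}(0)$ directly from the expression for $C_v^{(7)}(0)$ recorded in \eqref{eq:opCparticular}, simplify it with the cyclic invariance of the trace, and then feed in the two hypotheses $L_3=0$ and $L_5=0$ to eliminate every term except the two appearing in \eqref{eq:L7}. First I would take the trace of the bracket defining $C_v^{(7)}(0)$ and collect terms using $\tr(AB)=\tr(BA)$: the two terms $\tr(R_v\circ R_v^{(3)})$ and $\tr(R_v^{(3)}\circ R_v)$ coincide, likewise $\tr(R_v'\circ R_v'')$ and $\tr(R_v''\circ R_v')$, while the three cubic contributions $\tr(R_v^2\circ R_v')$, $\tr(R_v'\circ R_v^2)$ and $\tr(R_v\circ R_v'\circ R_v)$ all reduce to $\tr(R_v'\circ R_v^2)$. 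This collapses $L_7$ to $-\tfrac{7}{12}\bigl(9\tr R_v^{(5)}+20\tr(R_v\circ R_v^{(3)})+54\tr(R_v'\circ R_v'')+32\tr(R_v'\circ R_v^2)\bigr)$.

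The hypothesis $L_3=0$ now removes $\tr R_v^{(5)}$ by Proposition \ref{pro:L5}(i). The key step is to relate the two remaining second-order terms. Since $L_3=0$ and $L_5=0$ hold at every point and for every unit vector, Proposition \ref{pro:L5}(ii) gives $\tr\bigl(R_{\gamma_v'(t)}\circ R_{\gamma_v'(t)}'\bigr)=0$ identically along $\gamma_v$. Differentiating this vanishing function twice at $t=0$, applying the product rule for the covariant derivative along the geodesic exactly as in Remarks \ref{rem:Der1} and \ref{rem:Der2}, yields $\tr(R_v\circ R_v^{(3)})+3\tr(R_v'\circ R_v'')=0$. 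Substituting $\tr R_v^{(5)}=0$ and $\tr(R_v\circ R_v^{(3)})=-3\tr(R_v'\circ R_v'')$ turns the bracket into $-6\tr(R_v'\circ R_v'')+32\tr(R_v'\circ R_v^2)$, so that $L_7=-\tfrac{7}{6}\bigl(16\tr(R_v'\circ R_v^2)-3\tr(R_v'\circ R_v'')\bigr)$, which is exactly the asserted equivalence with \eqref{eq:L7}.

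For the invariance statement I would mimic the final paragraph of the proof of Proposition \ref{pro:L5}: differentiate $\tr\bigl(32R_{\gamma_v'(t)}^3-9(R_{\gamma_v'(t)}')^2\bigr)$ along $\gamma_v$, using $\frac{d}{dt}\tr R^3=3\tr(R^2\circ R')$ together with Remark \ref{rem:Der1} in the case $k=1$, namely $\frac{d}{dt}\tr(R')^2=2\tr(R'\circ R'')$. The derivative then equals $6\bigl(16\tr(R_v'\circ R_v^2)-3\tr(R_v'\circ R_v'')\bigr)$ at each point $\gamma_v(t)$, which vanishes by \eqref{eq:L7} applied to the unit tangent $\gamma_v'(t)$; hence $\tr(32R_v^3-9R_v'\circ R_v')$ is constant under the geodesic flow.

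The main obstacle I anticipate is the careful double differentiation in the second paragraph: one must apply the product rule correctly to $\tr(R\circ R')$ and keep exact track of how $R_v^{(3)}$ and $R_v''$ enter, which is precisely the role played by Remarks \ref{rem:Der1} and \ref{rem:Der2}. Everything else is bookkeeping with the cyclic property of the trace and the coefficients from \eqref{eq:opCparticular}.
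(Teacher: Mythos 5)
Your proposal is correct and follows essentially the same route as the paper: the same reduction of $\tr C_v^{(7)}(0)$ via cyclicity of the trace, the same key step of differentiating $\tr(R_{\gamma_v'(t)}\circ R_{\gamma_v'(t)}')=0$ twice along the geodesic to obtain $\tr(R_v\circ R_v^{(3)})+3\tr(R_v'\circ R_v'')=0$ (the paper's equation \eqref{eq:derL5}), and the same invariance argument at the end. The arithmetic checks out, including the final coefficient $-\tfrac{7}{6}$ and the factor $6$ in the derivative of $\tr(32R_v^3-9R_v'\circ R_v')$.
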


\begin{proof} \ Let $v\in T_{m}M$ be a unit vector. We first show that if
the property ${\operatorname{tr}}(R_{v}\circ R_{v}^{\prime })=0$ is
fulfilled, then
\begin{equation}
\begin{split}
{\operatorname{tr}}(R_{\gamma _{v}^{\prime }(t)}^{\prime }\circ R_{\gamma
_{v}^{\prime }(t)}^{\prime }+R_{\gamma _{v}^{\prime }(t)}\circ R_{\gamma
_{v}^{\prime }(t)}^{\prime \prime })& =0, \\
{\operatorname{tr}}(3R_{\gamma _{v}^{\prime }(t)}^{\prime \prime }\circ
R_{\gamma _{v}^{\prime }(t)}^{\prime }+R_{\gamma _{v}^{\prime }(t)}\circ
R_{\gamma _{v}^{\prime }(t)}^{(3)})& =0
\end{split}
\label{eq:derL5t}
\end{equation}
along $\gamma _{v}(t)$. Equivalently,
\begin{equation}
\begin{split}
{\operatorname{tr}}(R_{v}^{\prime }\circ R_{v}^{\prime }+R_{v}\circ
R_{v}^{\prime \prime })& =0, \\
{\operatorname{tr}}(3R_{v}^{\prime \prime }\circ R_{v}^{\prime }+R_{v}\circ
R_{v}^{(3)})& =0. \\
&
\end{split}
\label{eq:derL5}
\end{equation}

In fact, if ${\operatorname{tr}}(R_{v}\circ R_{v}^{\prime })=0$ for all $
m\in M$ and all unit vector $v\in T_{m}M$, by the usual argument, it follows
that
\begin{equation*}
{\operatorname{tr}}\left( R_{\gamma _{v}^{\prime }(t)}\circ R_{\gamma
_{v}^{\prime }(t)}^{\prime }\right) =0\quad \text{ along }\quad \gamma
_{v}(t),
\end{equation*}
since $\left| \gamma _{v}^{\prime }(t)\right| =1.$ Then, from Proposition
\ref{pro:Der2} we get \eqref{eq:derL5t} deriving twice the preceding
equality. The equivalence is immediate, applying the above argument to the
equalities given by \eqref{eq:derL5}.

Now, we continue proving \eqref{eq:L7}. From \eqref{eq:opCparticular} we see
that $L_{3}=0$ gives
\begin{eqnarray*}
L_{7} &=&-\tfrac{7}{12}{\operatorname{tr}}(9R_{v}^{(5)}+20R_{v}\circ
R_{v}^{(3)}+54R_{v}^{\prime }\circ R_{v}^{\prime \prime }+32R_{v}^{\prime
}\circ R_{v}^{2}) \\
&=&-\tfrac{7}{6}{\operatorname{tr}}(10R_{v}\circ R_{v}^{(3)}+27R_{v}^{\prime
}\circ R_{v}^{\prime \prime }+16R_{v}^{\prime }\circ R_{v}^{2})
\end{eqnarray*}
by (i) of Proposition \ref{pro:L5}.

If $L_{5}=0$, it follows from Proposition \ref{pro:L5} and \eqref{eq:derL5}
that $L_{7}$ is reduced to
\begin{equation*}
\begin{split}
L_{7}=& -\tfrac{7}{6}{\operatorname{tr}}(-30R_{v}^{\prime \prime }\circ
R_{v}^{\prime }+27R_{v}^{\prime }\circ R_{v}^{\prime \prime
}+16R_{v}^{\prime }\circ R_{v}^{2}) \\
=& -\tfrac{7}{6}{\operatorname{tr}}(-3R_{v}^{\prime }\circ R_{v}^{\prime
\prime }+16R_{v}^{\prime }\circ R_{v}^{2}).
\end{split}
\end{equation*}
Thus, the condition $L_{7}=0$ and equality \eqref{eq:L7} are equivalent.

Finally, due to \cite[Lemma 2.3]{D10} and Remark \ref{rem:Der1}, from
\eqref{eq:L7} we get
\begin{equation*}
\begin{split}
& \frac{d}{dt}\left( {\operatorname{tr}}\left( 32R_{\gamma _{v}^{\prime
}(t)}^{3}-9R_{\gamma _{v}^{\prime }(t)}^{\prime }\circ R_{\gamma
_{v}^{\prime }(t)}^{\prime }\right) \right) = \\
& =6{\operatorname{tr}}\left( 16R_{\gamma _{v}^{\prime }(t)}^{\prime }\circ
R_{\gamma _{v}^{\prime }(t)}^{2}-3R_{\gamma _{v}^{\prime }(t)}^{\prime
}\circ R_{\gamma _{v}^{\prime }(t)}^{\prime \prime }\right) =0
\end{split}
\end{equation*}
since $\left| \gamma _{v}^{\prime }(t)\right| =1.$ Thus,
\begin{equation*}
\hspace{1cm}{\operatorname{tr}}\left( 32R_{\gamma _{v}^{\prime
}(t)}^{3}-9R_{\gamma _{v}^{\prime }(t)}^{\prime }\circ R_{\gamma
_{v}^{\prime }(t)}^{\prime }\right) ={\operatorname{tr}}\left(
32R_{v}^{3}-9R_{v}^{\prime }\circ R_{v}^{\prime }\right)
\end{equation*}
along $\gamma _{v}(t)$, which means that ${\operatorname{tr}}\left(
32R_{v}^{3}-9R_{v}^{\prime }\circ R_{v}^{\prime }\right) $ is invariant
under the geodesic flow. Thus, the equivalence in the statement of the
proposition is shown. \end{proof}

\section{Geometric properties of D'Atri spaces of type $k$}

\label{sec:k-DAtri}In this section, we will prove a new geometric property
of D'Atri spaces that are also $k$-D'Atri for some $k=3,\dots ,n-1$, related
to Jacobi operators along geodesics which continues the results of
Proposition \ref{pro:2.2D10} below, where it is proved that ${
\operatorname{tr}}(R_{v})$, ${\operatorname{tr}}(R_{v}^{2})$ are invariant
under the geodesic flow in any $k$-D'Atri space for some $k=1,\dots ,n-1$.
The following proposition is the key to prove our main result, Theorem \ref
{th:k-D'A property}.

\begin{proposition}
\label{pro:2.2D10} If $M$ is $k$-D'Atri for
some $k\geq 1$, then

\begin{itemize}
\item[(i)]  ${\operatorname{tr}}R_{v}^{(k)}=0$ for all $k\geq 1$ and all
unit $v\in T_{m}M$.

\item[(ii)]  Especially, ${\operatorname{tr}}R_{v}$ is invariant under the
geodesic flow (i.e. condition $L_{3}=0$ is satisfied).

\item[(iii)]  ${\operatorname{tr}}(R_{v}\circ R_{v}^{\prime })=0$ for all $
v\in T_{m}M$, $|v|=1$ or equivalently, ${\operatorname{tr}}R_{v}^{2}$ is
invariant under the geodesic flow (i.e. condition $L_{5}=0$ is satisfied).
\end{itemize}

In particular, $M$ has constant scalar curvature.
\end{proposition}

\begin{proof}

The statements (i), (ii) hold according to \cite[Proposition 2.2]{D10}. The part (iii) in the same Proposition was proved incorrectly. The correct argument will be given in Proposition \ref{pro:correction} below. We first need some auxiliary calculations.

In that follows we will use Newton's relations (see \cite[A.IV.70]{B}):
Given $n$ real numbers, $\lambda _{1},...,\lambda _{n},$ and any natural $
k=1,...,n,$ if we denote by $s_{k}=\sum_{i=1}^{n}\lambda _{i}^{k}$ and by $
\sigma _{k}$ their associated $k$-th elementary symmetric functions, then
\begin{equation}
s_{k}-s_{k-1}\sigma _{1}+s_{k-2}\sigma _{2}+...+(-1)^{k-1}s_{1}\sigma
_{k-1}+(-1)^{k}k\sigma _{k}=0,\,\,\,k\leq n.  \label{eq:Newton}
\end{equation}

Let $v\in T_{m}M$ be a fix unit vector and let $t>0$ be a fix small real
number. Recall from Section \ref{sec:Ledger} that for each natural number $
l\geq 1$ we denote by $O(v,t^{l})=\sum_{i=l}^{\infty }\alpha _{i}t^{i},$
obtained from the Taylor expansion of $C_{v}(t)=tS_{v}(t)=\sum_{j=0}^{\infty
}\alpha _{j}t^{j}$, with $\alpha _{j}=\alpha _{j}(v)=\frac{1}{j!}
C_{v}^{(j)}(0).$ We expand $C_{v}(t)^{l}$ for each $l\geq 1$, as follows:
\begin{equation*}
\begin{split}
C_{v}(t)^{l}& =\left( I+\alpha _{2}t^{2}+\alpha _{3}t^{3}+\alpha
_{4}t^{4}+\alpha _{5}t^{5}+\alpha _{6}t^{6}+\alpha
_{7}t^{7}+O(v,t^{8})\right) ^{l} \\
& =I+\binom{l}{1}\{\alpha _{2}t^{2}+\alpha _{3}t^{3}+\alpha _{4}t^{4}+\alpha
_{5}t^{5}+\alpha _{6}t^{6}+\alpha _{7}t^{7}+O(v,t^{8})\} \\
& \hspace{0.7cm}+\binom{l}{2}\{\alpha _{2}t^{2}+\alpha _{3}t^{3}+\alpha
_{4}t^{4}+\alpha _{5}t^{5}+\alpha _{6}t^{6}+\alpha _{7}t^{7}+O(v,t^{8})\}^{2}
\\
& \hspace{0.7cm}+\binom{l}{3}\{\alpha _{2}t^{2}+\alpha _{3}t^{3}+\alpha
_{4}t^{4}+\alpha _{5}t^{5}+\alpha _{6}t^{6}+\alpha _{7}t^{7}+O(v,t^{8})\}^{3}
\\
& \hspace{0.7cm}+\cdots
\end{split}
\end{equation*}
That is,
\begin{equation*}
\begin{split}
C_{v}(t)^{l}=\,I& +t^{2}\binom{l}{1}\alpha _{2}+t^{3}\binom{l}{1}\alpha
_{3}+t^{4}\left\{ \binom{l}{1}\alpha _{4}+\binom{l}{2}\alpha _{2}^{2}\right\}
\\
& +t^{5}\left\{ \binom{l}{1}\alpha _{5}+\binom{l}{2}(\alpha _{2}\alpha
_{3}+\alpha _{3}\alpha _{2})\right\} \\
& +t^{6}\left\{ \binom{l}{1}\alpha _{6}+\binom{l}{2}(\alpha _{2}\alpha
_{4}+\alpha _{4}\alpha _{2}+\alpha _{3}^{2})+\binom{l}{3}\alpha
_{2}^{3}\right\} \\
& +t^{7}\left\{ \binom{l}{1}\alpha _{7}+\binom{l}{2}(\alpha _{2}\alpha
_{5}+\alpha _{5}\alpha _{2}+\alpha _{3}\alpha _{4}+\alpha _{4}\alpha
_{3})\right. \\
& \left. \hspace{1cm}+\binom{l}{3}(\alpha _{2}^{2}\alpha _{3}+\alpha
_{2}\alpha _{3}\alpha _{2}+\alpha _{3}\alpha _{2}^{2})\right\} +O(v,t^{8}) \\
&
\end{split}
\end{equation*}
Hence, setting $s_{l}=s_{l}(v)$ and $\gamma _{j}=\gamma _{j}(v)$, we have

\begin{equation}
\begin{split}
s_{l}={\operatorname{tr}}C_{v}(t)^{l}=\,n-1& +t^{2}\binom{l}{1}\gamma
_{1}+t^{3}\binom{l}{1}\gamma _{2}+t^{4}\left\{ \binom{l}{1}\gamma _{3}+
\binom{l}{2}\gamma _{4}\right\} \\
& +t^{5}\left\{ \binom{l}{1}\gamma _{5}+\binom{l}{2}\gamma _{6}\right\} \\
& +t^{6}\left\{ \binom{l}{1}\gamma _{7}+\binom{l}{2}\gamma _{8}+\binom{l}{3}
\gamma _{9}\right\} \\
& +t^{7}\left\{ \binom{l}{1}\gamma _{10}+\binom{l}{2}\gamma _{11}+\binom{l}{3
}\gamma _{12}\right\} +O(v,t^{8})
\end{split}
\label{eq:sl}
\end{equation}
where
\begin{equation}
\begin{gathered} \gamma_{1}={\operatorname{tr}}\alpha_2, \quad
\gamma_{2}={\operatorname{tr}}\alpha_3, \quad
\gamma_{3}={\operatorname{tr}}\alpha_4, \quad
\gamma_{4}={\operatorname{tr}}(\alpha_2^2), \quad
\gamma_{5}={\operatorname{tr}}\alpha_5, \\
\gamma_{6}=2{\operatorname{tr}}(\alpha_2\alpha_3), \quad
\gamma_{7}={\operatorname{tr}}\alpha_6, \quad \gamma_{8}=2
{\operatorname{tr}}(\alpha_2\alpha_4)+{\operatorname{tr}}(\alpha_3^2), \quad
\gamma_{9}={\operatorname{tr}}(\alpha_2^3),\\
\gamma_{10}={\operatorname{tr}}\alpha_7, \quad
\gamma_{11}=2{\operatorname{tr}}(\alpha_2\alpha_5)+2{\operatorname{tr}}(
\alpha_3\alpha_4),\quad
\gamma_{12}=3{\operatorname{tr}}(\alpha_2^2\alpha_3).\\ \end{gathered}
\label{eq:gammas}
\end{equation}

\begin{proposition}
\label{pro:correction} If $M$ is a $k$-D'Atri space for some $k\geq 1,$ then
\begin{equation*}
{\operatorname{tr}}\left( R_{v}\circ R_{v}^{\prime }\right) =0
\end{equation*}
for all unit vectors $v\in T_{m}M.$
\end{proposition}

\begin{proof} \ Using $(i)$ and $(ii)$ of Proposition \ref{pro:2.2D10},
\eqref{eq:opCparticular} and the definitions of $\gamma _{j}$, $j=1,...,6$,
given in \eqref{eq:gammas} we get,
\begin{equation*}
\gamma _{2}={\operatorname{tr}}\alpha _{3}=-\tfrac{1}{4}{\operatorname{tr}}
R_{v}^{\prime }=0,
\end{equation*}
\begin{equation*}
\gamma _{5}={\operatorname{tr}}\alpha _{5}=-\tfrac{10}{13\cdot 5!}\left( {
\operatorname{tr}}(R_{v}^{(3)})+{\operatorname{tr}}(R_{v}\circ R_{v}^{\prime
})\right) =-\tfrac{1}{156}{\operatorname{tr}}(R_{v}\circ R_{v}^{\prime }),
\end{equation*}
\begin{equation*}
\gamma _{6}={\operatorname{tr}}(2\alpha _{2}\alpha _{3})=\tfrac{1}{6}{
\operatorname{tr}}(R_{v}\circ R_{v}^{\prime })=-26\gamma _{5}.
\end{equation*}

Therefore, for all $l\geq 1$, \eqref{eq:sl} can be written as
\begin{equation}
\begin{split}
s_{l}=& \,n-1+t^{2}\binom{l}{1}\gamma _{1}+t^{4}\left\{ \binom{l}{1}\gamma
_{3}+\binom{l}{2}\gamma _{4}\right\} \\
& +t^{5}\left\{ -\frac{1}{156}\left( \binom{l}{1}-26\binom{l}{2}\right) {
\operatorname{tr}}(R_{v}\circ R_{v}^{\prime })\right\} +O(v,t^{6}).
\end{split}
\label{eq:slt6}
\end{equation}

Now, by Newton's formula \eqref{eq:Newton} we get
\begin{equation*}
(-1)^{l}\sigma _{l}(v,t)=-s_{l}+s_{l-1}\sigma _{1}(v,t)+\cdots
+(-1)^{l-1}s_{1}\sigma _{l-1}(v,t),\text{ }l=1,...,k
\end{equation*}
and applying \eqref{eq:slt6} in the equality above for each $l=1,...,k$, we
obtain
\begin{equation*}
\begin{split}
\sigma _{k}(v,t)=& \binom{n-1}{k}+t^{2}\binom{n-2}{k-1}\gamma
_{1}+t^{4}\left\{ \binom{n-2}{k-1}\gamma _{3}+\frac{1}{2}\binom{n-3}{k-2}
\left( \gamma _{1}^{2}-\gamma _{4}\right) \right\} \\
& +t^{5}\left\{ -\frac{1}{156}\left( \binom{n-2}{k-1}+13\binom{n-3}{k-2}
\right) {\operatorname{tr}}(R_{v}\circ R_{v}^{\prime })\right\} +O(v,t^{6}).
\end{split}
\end{equation*}
From \eqref{eq:gammas} and the facts ${\operatorname{tr}}\alpha _{2}(v)={
\operatorname{tr}}\alpha _{2}(-v)$ $({\operatorname{tr}}R_{v}={
\operatorname{tr}}R_{-v})$ and ${\operatorname{tr}}\alpha _{4}(v)={
\operatorname{tr}}\alpha _{4}(-v)$ $({\operatorname{tr}}R_{v}^{2}={
\operatorname{tr}}R_{-v}^{2},$ ${\operatorname{tr}}R_{v}^{\prime \prime }=0)$
, we see that $\gamma _{i}(v)=\gamma _{i}(-v)$, $i=1,3,4$. Thus, under the
assumption that $M$ is a $k$-D'Atri space for some $k\geq 3$ and setting $
O(\pm v,t)=O(v,t)-O(-v,t),$ we have
\begin{equation*}
\begin{split}
0=& \ \sigma _{k}(v,t)-\sigma _{k}(-v,t) \\
=& \ t^{5}\left\{ -\frac{1}{156}\left( \binom{n-2}{k-1}+13\binom{n-3}{k-2}
\right) {\operatorname{tr}}(R_{v}\circ R_{v}^{\prime })\right\} +O(\pm
v,t^{6}).
\end{split}
\end{equation*}
This gives
\begin{equation*}
-\frac{1}{156}\left( \binom{n-2}{k-1}+13\binom{n-3}{k-2}\right) {
\operatorname{tr}}(R_{v}\circ R_{v}^{\prime })+O(\pm v,t)=0
\end{equation*}
for any small $t>0$. Finally, we conclude de desired result taking the limit
as $t\rightarrow 0$.
\end{proof}

The proof of Proposition \ref{pro:2.2D10} is herewith completed.
\end{proof}

We note that by \cite{B-V.92} D'Atri spaces in dimension $3$ are homogeneous
and have the property that the eigenvalues of the Jacobi operator are
constant along each geodesic (they are ${\frak{C}}$-spaces). Thus, ${
\operatorname{tr}}(R_{v}^{3})$ is invariant under the geodesic flow and the
next theorem is also valid for $n=3$.

\begin{theorem}
\label{th:k-D'A property} If $M$ is a $n$-dimensional D'Atri space with $
n\geq 4$ which is also a D'Atri space of type $k$ for some $k=3,\dots ,n-1$,
then
\begin{equation}
{\operatorname{tr}}(R_{v}^{\prime }\circ R_{v}^{2})=0\text{ for all unit
vector }v\in T_{m}M.  \label{eq:C1}
\end{equation}
Equivalently, ${\operatorname{tr}}(R_{v}^{3})$ is invariant under the
geodesic flow.
\end{theorem}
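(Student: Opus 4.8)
The plan is to use that, by Theorem~\ref{th:k-D'A is D'A}, $M$ is already D'Atri, so every odd Ledger condition vanishes and the conclusions of Propositions~\ref{pro:L5} and~\ref{pro:L7} are at our disposal: $\tr R_v^{(j)}=0$ for $j\geq1$, $\tr(R_v\circ R_v')=0$, together with the relations \eqref{eq:derL5} and \eqref{eq:L7}. The new input comes from pushing the $k$-D'Atri identity one order further than in Theorem~\ref{th:k-D'A is D'A}. Since Proposition~\ref{pro:led_amd} gives $\alpha_l(-v)=(-1)^l\alpha_l(v)$, we have $C_{-v}(t)=C_v(-t)$, so the $k$-D'Atri condition $\sigma_k(C_v(t))=\sigma_k(C_{-v}(t))$ says exactly that $t\mapsto\sigma_k(C_v(t))$ is even; hence every odd Taylor coefficient of $\sigma_k(C_v(t))$ must vanish. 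I would target the coefficient of $t^{7}$, the lowest order at which $\tr(R_v'\circ R_v^{2})$, equivalently $\tr(\alpha_2\circ\alpha_2\circ\alpha_3)$ since $\alpha_2=-\tfrac13R_v$ and $\alpha_3=-\tfrac14R_v'$, can occur.

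To read off that coefficient, write $C_v(t)=\Id+X$ with $X=\sum_{l\geq2}\alpha_l t^{l}$ and use the identity $\sigma_k(\Id+X)=\sum_{j=0}^{k}\binom{n-j}{k-j}\sigma_j(X)$. A product of $j$ factors of $X$ has order at least $2j$, so only $j=1,2,3$ reach order $t^{7}$; this is precisely where $k\geq3$ is needed, since it guarantees that the $\sigma_3$ summand is present with weight $\binom{n-3}{k-3}$. Expanding $\sigma_2,\sigma_3$ through Newton's relations \eqref{eq:Newton} in the power sums $\tr X^{i}$, and discarding every monomial carrying a factor $\tr\alpha_3=\tr\alpha_5=0$ or $\tr(\alpha_2\circ\alpha_3)\propto\tr(R_v\circ R_v')=0$ (the $j=1$ term drops because $\tr\alpha_7=0$), the vanishing of the $t^{7}$ coefficient collapses to
\begin{equation*}
\binom{n-2}{k-2}\bigl(-\tr(\alpha_2\circ\alpha_5)-\tr(\alpha_3\circ\alpha_4)\bigr)+\binom{n-3}{k-3}\,\tr(\alpha_2\circ\alpha_2\circ\alpha_3)=0.
\end{equation*}

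The decisive step is the simplification of the three surviving traces. Inserting the expressions for $\alpha_4,\alpha_5$ from \eqref{eq:opCparticular} and setting $P=\tr(R_v'\circ R_v^{2})$, $Q=\tr(R_v'\circ R_v'')$, relation \eqref{eq:L7} gives $Q=\tfrac{16}{3}P$ and the second identity in \eqref{eq:derL5} gives $\tr(R_v\circ R_v^{(3)})=-3Q=-16P$. With these substitutions I expect $\tr(\alpha_2\circ\alpha_5)=-\tfrac{5}{36}P$ and $\tr(\alpha_3\circ\alpha_4)=\tfrac{5}{36}P$, so that the entire $\binom{n-2}{k-2}$ block cancels and the displayed equation reduces to $-\tfrac{1}{36}\binom{n-3}{k-3}P=0$. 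As $3\leq k\leq n-1$ forces $\binom{n-3}{k-3}\neq0$, this yields $P=\tr(R_v'\circ R_v^{2})=0$, i.e.\ \eqref{eq:C1}. The equivalent formulation then follows as in Propositions~\ref{pro:L5} and~\ref{pro:L7}: applying \eqref{eq:C1} along $\gamma_v$ and using $\tfrac{d}{dt}\tr R_{\gamma_v'(t)}^{3}=3\,\tr(R_{\gamma_v'(t)}^{2}\circ R_{\gamma_v'(t)}')$ shows that $\tr R_v^{3}$ is constant along each geodesic.

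The main obstacle I anticipate is the bookkeeping of the $t^{7}$ coefficient: one must make sure that no contributing monomial $\alpha_{l_1}\circ\cdots\circ\alpha_{l_j}$ is omitted and that the cyclic reorderings inside the traces are tracked correctly, because the whole argument hinges on the clean cancellation of the $\binom{n-2}{k-2}$ term. That cancellation is not accidental; it is forced by the previously established relations \eqref{eq:derL5} and \eqref{eq:L7}, but confirming it requires carrying the exact rational coefficients of $\alpha_4$ and $\alpha_5$ through each trace, which is where an arithmetic slip is most likely to occur.
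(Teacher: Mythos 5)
Your proposal is correct and is essentially the paper's own argument: both extract the $t^{7}$ coefficient of $\sigma_{k}(C_{v}(t))$, kill every contribution except the one proportional to $\tr(\alpha_{2}^{2}\alpha_{3})\propto\tr(R_{v}'\circ R_{v}^{2})$ using the odd Ledger conditions together with \eqref{eq:derL5} and \eqref{eq:L7} (the paper phrases your $\pm\tfrac{5}{36}P$ cancellation as the single identity $\gamma_{11}=2\tr(\alpha_{2}\alpha_{5})+2\tr(\alpha_{3}\alpha_{4})=0$, and reaches $\sigma_k$ via Newton's relations on $s_l=\tr C_v(t)^l$ rather than via $\sigma_k(\Id+X)=\sum_j\binom{N-j}{k-j}\sigma_j(X)$, but the computation is the same), and then conclude from the nonvanishing of the surviving binomial coefficient for $3\le k\le n-1$. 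The only blemish is an off-by-one in your binomial coefficients ($C_v(t)$ acts on the $(n-1)$-dimensional tangent space of the geodesic sphere, so the key coefficient is $\binom{n-4}{k-3}$ rather than $\binom{n-3}{k-3}$), which is harmless since both are nonzero in the stated range.
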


\begin{proof} \ Under our hypothesis, we know that all odd Ledger
conditions are satisfied due to Remark \ref{rem:D'A iff Ledger}. Thus, we
directly get from \eqref{eq:gammas} using Definition \ref{def:Ledger} that $
\gamma _{2}={\operatorname{tr}}\alpha _{3}=-\tfrac{1}{4}{\operatorname{tr}}
R_{v}^{\prime }=0$ and analogously, $\gamma _{5}=0=\gamma _{10}$ ($L_{5}=0=L_{7}$). Moreover, applying Proposition \ref{pro:L5},
\eqref{eq:derL5} and Proposition \ref{pro:L7}, we also have from
\eqref{eq:opCparticular} that
\begin{equation*}
\begin{split}
\gamma _{6}=& {\operatorname{tr}}(2\alpha _{2}\alpha _{3})=\tfrac{1}{6}{
\operatorname{tr}}(R_{v}\circ R_{v}^{\prime })=0, \\
\gamma _{11}=& \,2{\operatorname{tr}}(\alpha _{2}\alpha _{5})+2{
\operatorname{tr}}(\alpha _{3}\alpha _{4})=\tfrac{1}{5!}{\operatorname{tr}}
(C_{v}^{(2)}(0)C_{v}^{(5)}(0))+\tfrac{1}{3\cdot 4!}{\operatorname{tr}}
(C_{v}^{(3)}(0)C_{v}^{(4)}(0)) \\
=& \tfrac{1}{54}{\operatorname{tr}}(R_{v}\circ R_{v}^{(3)})+\left( \tfrac{1}{
54}+\tfrac{1}{90}\right) {\operatorname{tr}}(R_{v}^{\prime }\circ R_{v}^{2})+
\tfrac{1}{20}{\operatorname{tr}}(R_{v}^{\prime }\circ R_{v}^{\prime \prime })
\\
=& \left( \tfrac{1}{54}+\tfrac{1}{90}\right) {\operatorname{tr}}
(R_{v}^{\prime }\circ R_{v}^{2})+\left( \tfrac{1}{20}-\tfrac{3}{54}\right) {
\operatorname{tr}}(R_{v}^{\prime }\circ R_{v}^{\prime \prime }) \\
=& \tfrac{1}{540}\left( 16{\operatorname{tr}}(R_{v}^{\prime }\circ
R_{v}^{2})-3{\operatorname{tr}}(R_{v}^{\prime }\circ R_{v}^{\prime \prime
})\right) =0.
\end{split}
\end{equation*}
Therefore, for each $l\geq 1$, \eqref{eq:sl} is reduced to
\begin{equation}
\begin{split}
s_{l}={\operatorname{tr}}C_{v}(t)^{l}=\,n-1& +t^{2}\binom{l}{1}\gamma
_{1}+t^{4}\left\{ \binom{l}{1}\gamma _{3}+\binom{l}{2}\gamma _{4}\right\} \\
& +t^{6}\left\{ \binom{l}{1}\gamma _{7}+\binom{l}{2}\gamma _{8}+\binom{l}{3}
\gamma _{9}\right\} \\
& +t^{7}\left\{ \binom{l}{3}\gamma _{12}\right\} +O(v,t^{8}).
\end{split}
\label{eq:sk}
\end{equation}

Now, denoting $\sigma _{l}=\sigma _{l}(v,t)$ and substituting \eqref{eq:sk}
in the recursive formula \eqref{eq:Newton} for each $l=1,...,k,$ we obtain

\begin{equation*}
\begin{split}
\sigma _{k}& =\,\binom{n-1}{k}+t^{2}\binom{n-2}{k-1}\gamma _{1}+t^{4}\left\{
\binom{n-2}{k-1}\gamma _{3}+\frac{1}{2}\binom{n-3}{k-2}\left( \gamma
_{1}^{2}-\gamma _{4}\right) \right\} \\
+& t^{6}\left\{ \binom{n-2}{k-1}\gamma _{7}+\binom{n-3}{k-2}\left( \gamma
_{1}\gamma _{3}-\frac{\gamma _{8}}{2}\right) +\binom{n-4}{k-3}\left( \frac{
\gamma _{1}^{3}}{6}-\frac{\gamma _{1}\gamma _{4}}{2}+\frac{\gamma _{9}}{3}
\right) \right\} \\
+& t^{7}\left\{ \frac{1}{3}\binom{n-4}{k-3}\gamma _{12}\right\} +O(v,t^{8}).
\end{split}
\end{equation*}
Moreover, using \eqref{eq:gammas} and the facts ${\operatorname{tr}}\alpha
_{2}(v)={\operatorname{tr}}\alpha _{2}(-v)$, ${\operatorname{tr}}\alpha
_{4}(v)={\operatorname{tr}}\alpha _{4}(-v)$, ${\operatorname{tr}}\alpha
_{6}(v)={\operatorname{tr}}\alpha _{6}(-v)$ $({\operatorname{tr}}
(R_{v}^{\prime }\circ R_{v}^{\prime })={\operatorname{tr}}(R_{-v}^{\prime
}\circ R_{-v}^{\prime }),{\operatorname{tr}}(R_{v}\circ R_{v}^{\prime \prime
})={\operatorname{tr}}(R_{-v}\circ R_{-v}^{\prime \prime }),{
\operatorname{tr}}R_{v}^{3}={\operatorname{tr}}R_{-v}^{3},{\operatorname{tr}}
R_{v}^{(4)}=0)$ and ${\operatorname{tr}}\alpha _{3}(v)=-{\operatorname{tr}}
\alpha _{3}(-v)$ $({\operatorname{tr}}R_{v}^{\prime }=-{\operatorname{tr}}
R_{-v}^{\prime }),$ it is easy to realize that
\begin{equation*}
\gamma _{i}(v)=\gamma _{i}(-v),\text{ }i=1,3,4,7,8,9,\text{ and }\gamma
_{12}(v)=-\gamma _{12}(-v).
\end{equation*}
Thus, under the assumption that $M$ is a $k$-D'Atri space for some $k\geq 3$
and setting $O(\pm v,t)=O(v,t)-O(-v,t),$ we obtain
\begin{equation*}
0=\sigma _{k}(v,t)-\sigma _{k}(-v,t)=t^{7}\left\{ \frac{2}{3}\binom{n-4}{k-3}
\gamma _{12}\right\} +O(\pm v,t^{8}).
\end{equation*}
This gives
\begin{equation*}
\frac{2}{3}\binom{n-4}{k-3}\gamma _{12}+O(\pm v,t)=0\text{ for any small }
t>0,
\end{equation*}
which implies that $\gamma _{12}=0$ for $n\geq 4$ and $k\geq 3,$ taking into
account that $\lim_{t\rightarrow 0}O(\pm v,t)=0.$ Therefore, proceeding as
before,
\begin{equation*}
0=\gamma _{12}=3{\operatorname{tr}}\left( \alpha _{2}^{2}\alpha _{3}\right)
=-\tfrac{1}{12}{\operatorname{tr}}\left( R_{v}^{2}\circ R_{v}^{\prime
}\right)
\end{equation*}
and we get the desired condition \eqref{eq:C1}.

Finally, due to \cite[Lemma 2.3]{D10} and \eqref{eq:C1} we have that
\begin{equation*}
\frac{d}{dt}\left( {\operatorname{tr}}R_{\gamma _{v}^{\prime}(t)}^{3}\right) =3{\operatorname{tr}}\left( R_{\gamma _{v}^{\prime}(t)}^{2}\circ R_{\gamma _{v}^{\prime }(t)}^{\prime }\right) =0,
\end{equation*}
since $\left| \gamma _{v}^{\prime }(t)\right| =1.$ Then, ${\operatorname{tr}}
\left( R_{\gamma _{v}^{\prime }(t)}^{3}\right) ={\operatorname{tr}(}
R_{v}^{3})$ along $\gamma _{v}(t)$ which means that ${\operatorname{tr}(}
R_{v}^{3})$ is invariant under the geodesic flow. \end{proof}

Note that from Proposition \ref{pro:2.2D10}, with the same proof above and
applying Proposition \ref{pro:L7}, we also have the following consequence.

\begin{corollary}
If $M$ is a $k$-D'Atri space for some $k=3,...,n-1$ that satisfies the third
odd Ledger condition $L_{7}=0$ (or, equivalently, the trace ${\operatorname{tr}}(32R_{v}^{3}-9R_{v}^{\prime }\circ R_{v}^{\prime })$ is invariant under the geodesic flow), then ${\operatorname{tr}}(R_{v}^{3})$ is invariant under the geodesic flow.
\end{corollary}

Non-symmetric Damek-Ricci spaces are D'Atri spaces which are not 3-D'Atri
(see \cite[Theorem 3.2, (ii)]{D10}). In the next section we will use the
previous theorem to prove that non-symmetric Damek-Ricci spaces are not $k$
-D'Atri for any $k\geq 3$ (see Corollary \ref{cor:kDAtriSym}). Furthermore,
no examples are known of $k$-D'Atri spaces for some $k\geq 3$ which are not
D'Atri. Therefore, it still remains open the question if a $k$-D'Atri space
for some $k\geq 3$ is a D'Atri space.

\section{Applications to D'Atri spaces of Iwasawa type} \label{sec:kDatri and Iwasawa}

We recall that a solvable Lie algebra ${\frak{s}}$ with inner product $
\left\langle ,\right\rangle $ is a metric Lie algebra of Iwasawa type, if it
satisfies the conditions

\begin{itemize}
\item[(i)]  $\,{\frak{s}}={\frak{n}}\oplus {\frak{a}}$ where ${\frak{n}}=[{
\frak{s}},{\frak{s}}]$ and ${\frak{a}},$ the orthogonal complement of ${
\frak{n}}$, is abelian.

\item[(ii)]  The operators $\left. {\operatorname{ad}}_{H}\right| _{{\frak{n}
}}$ are symmetric and non zero for all $H\in {\frak{a}}$.

\item[(iii)]  There exits $H_{0}\in {\frak{a}}\,$\ such that ${
\operatorname{ad}}_{H_{0}}|_{{\frak{n}}}\,$ has all positive eigenvalues.
\end{itemize}

The simply connected Lie group $S$ with Lie algebra ${\frak{s}}$ and left
invariant metric $g$ induced by the inner product $\left\langle
,\right\rangle $ will be called a space of Iwasawa type. The algebraic rank
of $S$ (equivalently ${\frak{s}}$) is defined by $\dim {\frak{a}}$.

In that follows we assume that $M=S$ and fix $m=e,\,\,$the identity of the
group $S;$ we identify ${\frak{s}}$ with $T_{e}S$ by $X=\widetilde{X}_{e},$ $
\,$where $\widetilde{X}$ denotes the left invariant field on $S$ associated
to $X\in {\frak{s}}.$ The Levi Civita connection $\widetilde{\nabla }\,$ at $
e,$ denoted by $\nabla ,\,\,$and the curvature tensor $R$ associated to the
metric can be computed by
\begin{eqnarray*}
2\left\langle \nabla _{X}Y,Z\right\rangle &=&\left\langle
[X,Y],Z\right\rangle -\left\langle [Y,Z],X\right\rangle +\left\langle
[Z,X],Y\right\rangle \\
R(X,Y) &=&[\nabla _{X},\nabla _{Y}]-\nabla _{[X,Y]}
\end{eqnarray*}
for any $X,Y,Z$ in ${\frak{s}}$.
Using the above formula one obtains for each $X,Y \in {\frak{s}}$ and $H\in \frak{a}$ that
\begin{eqnarray*}
2\left\langle \nabla _{H}X,Y\right\rangle  &=&\left\langle
[H,X],Y\right\rangle -\left\langle [X,Y],H\right\rangle +\left\langle
[Y,H],X\right\rangle  \\
&=&\left\langle [H,X],Y\right\rangle -\left\langle [H,Y],X\right\rangle
=\left\langle \text{ad}_{H}X,Y\right\rangle -\left\langle \text{ad}%
_{H}Y,X\right\rangle =0
\end{eqnarray*}
since $\text{ad}_H$ is symmetric for all $H\in \frak{a}$. Then, $\nabla _{H}=0$ and hence $R_{H}=-{\operatorname{ad}}
_{H}^{2}$. Moreover, $R_{H}^{\prime }=0$ for all $H\in {\frak{a}}$ since $\gamma _{H}(t)=\exp tH$
($\nabla _{H}H=0$), $\gamma _{H}^{\prime }(t)=\widetilde{H}_{\exp tH}=(\text{d} L_{\exp tH})_{e}H$ ($\exp$ denotes the exponential map of the Lie group $S$) and the metric is left invariant, we have that
\begin{equation*}
\text{\thinspace }R_{H}^{\prime }=\left. \left( \nabla _{\gamma _{H}^{\prime
}(t)}\text{\thinspace }R_{\gamma _{H}^{\prime }(t)}\right) \right|
_{t=0}=\left. (dL_{\exp tH})_{e}(\nabla _{H}R_{H})\right| _{t=0}=\nabla
_{H}R_{H}=0.
\end{equation*}
Note that by definition, $\nabla _{H}R_{H}(X)=\nabla
_{H}(R_{H}X)-R_{H}(\nabla _{H}X)=0$ for any $X\in \frak{s}$.

In addition, we recall the following known fact proved in \cite[Proposition 2.1]{D09}: For each unit vector $X$ in $\frak{s}=T_{e}S$, we denote by $\gamma _{X}(t)$ the geodesic in $S$ with $\gamma _{X}(0)=e$, and by $x(t)$ the curve uniquely defined in the unit sphere of $\frak{s}$ ($\left| x(t)\right| =1$) obtained by the isometry $\left( \text{dL}_{\gamma
_{X}(t)}\right) _{e}:\frak{s}=T_{e}S\rightarrow T_{\gamma _{X}(t)}S$. We express
$\gamma _{X}^{\prime }(t)=\left( \text{dL}_{\gamma _{X}(t)}\right) _{e}x(t) \in T_{\gamma _{X}(t)}S \text{ for all real }t.$
Then, either
\begin{equation}\label{eq:IwaGePro}
x(t)=H\in \frak{a}\text{ and }\gamma _{H}(t)=\exp tH \text{ or }
\lim_{n\rightarrow \infty }x(t_{n})=H
\end{equation}
for some $H\in \frak{a}$ and some sequence of real numbers $\{t_{n}\}_{n\in N}$.

It is worth pointing out that irreducible homogeneous and simply connected
D'Atri spaces of nonpositive curvature can be represented as Iwasawa type
spaces (they are Einstein) and they are either symmetric spaces of higher
rank or Damek-Ricci spaces in the case of rank one, including the rank one
symmetric spaces of noncompact type (see \cite{H95},\thinspace \cite{H98}, \cite{H06}).
Moreover, D'Atri spaces (without curvature restrictions) of Iwasawa type and
algebraic rank one are Damek-Ricci spaces, since they are harmonic (see \cite
{D09}). For the definition of Damek-Ricci spaces and its properties see \cite
{B-Tr-V}. Now, we analyze ${\frak{C}}$-spaces and the D'Atri condition on
Iwasawa type spaces.

\begin{proposition}
\label{pro:IwaCpro} If $S$ is a ${\frak{C}}$-space of Iwasawa type, then $S$
has nonpositive sectional curvature.
\end{proposition}

\begin{proof} \ Let $X\in {\frak{s}}$ be a unit vector. Since $S$ is a ${
\frak{C}}$-space, the eigenvalues of $R_{\gamma _{X}^{\prime }(t)}$ are
constant along $\gamma _{X}(t)$ and the characteristic polynomial of $
R_{\gamma _{X}^{\prime }(t)}$ can be write as
\begin{equation}
\det \left( \lambda {\operatorname{Id}}-R_{\gamma _{X}^{\prime }(t)}\right)
=\det \left( \lambda {\operatorname{Id}}-R_{X}\right) .  \label{eq:Cpro}
\end{equation}
Moreover, using the previous notation
\begin{eqnarray*}
\det \left( \lambda {\operatorname{Id}}-R_{\gamma _{X}^{\prime }(t)}\right)
&=&\det \left( (\text{d}L_{\gamma _{X}(t)})_{e}\circ (\lambda {
\operatorname{Id}}-R_{x(t)}\text{)}\circ (\text{d}L_{\gamma
_{X}(t)})_{e}^{-1}\right) \\
&=&\det \left( \lambda {\operatorname{Id}}-R_{x(t)}\right) \text{ for all }\;t\in {\mathbb R}.
\end{eqnarray*}
Then, taking lim as $t\rightarrow \infty $ in the above equality, it follows
from \eqref{eq:IwaGePro} and \eqref{eq:Cpro} that
\begin{equation*}
\det \left( \lambda {\operatorname{Id}}-R_{X}\right) =\det \left( \lambda {
\operatorname{Id}}-R_{H}\right) =\det \left( \lambda {\operatorname{Id}}+{
\operatorname{ad}}_{H}^{2}\right)
\end{equation*}
for some $H\in {\frak{a}}$. Hence, the eigenvalues of $R_{X}$ are exactly
those of $-$ad$_{H}^{2}$ and consequently, the sectional curvature of $S$ is
nonpositive. \end{proof}

\begin{corollary}
\label{cor:IwaCproDAtri} Let $S$ be an irreducible space of Iwasawa type.
Then, $S$ is a symmetric space of noncompact type if and only if it is a
D'Atri space and a ${\frak{C}}$-space.
\end{corollary}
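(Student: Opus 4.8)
The plan is to establish the two implications separately, the forward one being essentially the motivation recalled in the introduction. If $S$ is a symmetric space of noncompact type, then its local geodesic symmetries are global isometries, so they preserve the mean curvature of every small geodesic sphere and $S$ is a D'Atri space. Moreover a symmetric space has parallel curvature tensor, so along any geodesic $\gamma_X$ the Jacobi operators $R_{\gamma_X'(t)}$ arise from $R_X$ by parallel transport; since parallel transport is a linear isometry, the eigenvalues of $R_{\gamma_X'(t)}$ are constant along $\gamma_X$, and hence $S$ is a ${\frak{C}}$-space. This disposes of the ``only if'' direction with no further work.

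For the converse, which carries the content, I would assume that the irreducible Iwasawa type space $S$ is simultaneously a D'Atri space and a ${\frak{C}}$-space. The first step is to invoke Proposition \ref{pro:IwaCpro}: being a ${\frak{C}}$-space of Iwasawa type forces the sectional curvature of $S$ to be nonpositive. Thus $S$ is an irreducible, homogeneous, simply connected D'Atri space of nonpositive curvature, and by the classification recalled in the introduction (\cite{H95}, \cite{H06}) it must be either a symmetric space of higher rank or, in the algebraic rank one case, a Damek-Ricci space. I would then split into these two cases. In the higher rank case the classification already gives that $S$ is symmetric, necessarily of noncompact type since the curvature is nonpositive. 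In the rank one case $S$ is a Damek-Ricci space, but it is also a ${\frak{C}}$-space of Iwasawa type, so by \cite{D01} it must be symmetric, hence a rank one symmetric space of noncompact type. In either case $S$ is symmetric of noncompact type, completing the argument.

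The main obstacle is the reliance on the external structure theory rather than on any new computation, so the work lies in assembling the pieces correctly. The role of Proposition \ref{pro:IwaCpro} is crucial here: it is the nonpositivity of the curvature that lets us enter the Heber classification, since the D'Atri hypothesis by itself does not pin down the structure of $S$. The delicate point will be the rank one case, where Damek-Ricci spaces are always D'Atri but generically fail to be ${\frak{C}}$-spaces --- the non-symmetric ones being the classical counterexamples --- so it is precisely the ${\frak{C}}$-space hypothesis, through \cite{D01}, that eliminates the non-symmetric Damek-Ricci spaces and forces $S$ to be symmetric.
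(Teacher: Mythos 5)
Your proof is correct and follows the same overall skeleton as the paper's: the forward direction is the standard observation about symmetric spaces, and the converse goes through Proposition \ref{pro:IwaCpro} to obtain nonpositive curvature, then through Heber's classification of irreducible homogeneous simply connected D'Atri spaces of nonpositive curvature (\cite{H95}) to reduce to the dichotomy ``symmetric of higher rank'' versus ``algebraic rank one.'' The only genuine divergence is in how the rank one case is closed. You invoke the result of \cite{D01} that rank one ${\frak{C}}$-spaces of Iwasawa type are symmetric --- a result the paper itself quotes in its introduction, so this is a legitimate and in fact slightly more economical route: it uses only the ${\frak{C}}$-hypothesis and does not need the harmonic/Damek--Ricci structure coming from the D'Atri side. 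The paper instead re-uses the eigenvalue identification established in the proof of Proposition \ref{pro:IwaCpro} (the eigenvalues of $R_{X}$ coincide with those of $-\ad_{H}^{2}$, hence $\tr R_{X}^{k}=\tr(-\ad_{H_{0}}^{2})^{k}$ for $k=1,\dots,n-1$) to conclude that $S$ is $k$-stein for all $k$, and then applies \cite[Corollary 2.2]{D06} to get that the rank one harmonic space is symmetric. Both closings are valid; the paper's has the advantage of being self-contained relative to the computation already carried out in Proposition \ref{pro:IwaCpro}, while yours outsources the rank one case entirely to \cite{D01}. One small point worth making explicit in your write-up: Heber's theorem applies because spaces of Iwasawa type are by definition simply connected homogeneous manifolds, so together with irreducibility, the D'Atri hypothesis, and the nonpositive curvature from Proposition \ref{pro:IwaCpro}, all hypotheses of the classification are met.
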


\begin{proof} \ It is well-known that symmetric spaces are an important
subclass of D'Atri spaces and ${\frak{C}}$-spaces.

On the other hand, under our hypothesis, by Proposition \ref{pro:IwaCpro} $S$
is an irreducible D'Atri space of nonpositive curvature. It follows from
\cite[Theorem 4.7]{H95} that either $S$ is symmetric of noncompact type of
higher rank, or $S$ is a D'Atri (harmonic) space of algebraic rank one. In
the last case, by applying \cite[Corollary 2.2.]{D06} $S$ is a symmetric
space of rank one, since
\begin{equation*}
\text{tr}R_{X}^{k}=\text{tr}\left( -\text{ad}_{H_{0}}^{2}\right) ^{k}\text{
for all }X\in \frak{s},\;\left| X\right| =1\text{ and }k=1,...,n-1,
\end{equation*}
which means that $S$ is $k$-stein for all $k=1,...,n-1.$ \end{proof}

In the next result we stablish a number of curvature conditions needed to
determine whether a space of Iwasawa type is symmetric.

\begin{theorem}
\label{th:Ledger+Iwasawa} Let $S$ be a space of Iwasawa type. Then,

\begin{itemize}
\item[(i)]  $S$ is a symmetric space if and only if ${\operatorname{tr}}
(32R_{X}^{3}-9R_{X}^{\prime }\circ R_{X}^{\prime })$ and ${\operatorname{tr}}
(R_{X}^{3})$ are invariant under the geodesic flow for all $X\in \frak{s}$
with $|X|=1$.

\item[(ii)]  $S$ is a symmetric space if and only if the three first odd
Ledger conditions are satisfied and ${\operatorname{tr}}(R_{X}^{\prime
}\circ R_{X}^{2})=0$ for all $X\in \frak{s}$ with $|X|=1$.
\end{itemize}
\end{theorem}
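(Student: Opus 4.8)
The plan is to prove both biconditionals by reducing everything to the single pointwise statement $R'_X=0$ for all $X\in\s$, and then invoking Corollary~\ref{cor:IwaCproDAtri}. The ``only if'' directions are immediate: a symmetric space has $\nabla R=0$, so along every geodesic $R'_{\gamma'_X(t)}=0$ and $R_{\gamma'_X(t)}$ is parallel; hence $\tr(R_X^3)$, $\tr\bigl((R'_X)^2\bigr)=0$ and $\tr(32R_X^3-9R'_X\circ R'_X)=32\tr(R_X^3)$ are all constant along geodesics, every odd Ledger condition holds (symmetric spaces are D'Atri), and $\tr(R'_X\circ R_X^2)=0$. So I concentrate on the ``if'' directions.

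For (i), I would first observe that invariance of $\tr(R_X^3)$ and of $\tr(32R_X^3-9R'_X\circ R'_X)$ forces, through the combination $\tfrac19\bigl(32\tr(R_X^3)-\tr(32R_X^3-9R'_X\circ R'_X)\bigr)$, that $Q(X):=\tr\bigl((R'_X)^2\bigr)$ is invariant under the geodesic flow. The core step is then a homogeneity-plus-limit argument modelled on the proof of Proposition~\ref{pro:IwaCpro}. Since $Q$ is an isometry invariant of unit tangent vectors and $L_{\gamma_X(t)^{-1}}$ carries $\gamma'_X(t)$ to $x(t)\in\s$, invariance gives $Q(X)=Q\bigl(\gamma'_X(t)\bigr)=Q(x(t))$ for all $t$, where $Q(Y)=\tr\bigl((R'_Y)^2\bigr)$ is a continuous (in fact polynomial) function of $Y\in\s$ computed at $e$. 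By \eqref{eq:IwaGePro} there is a sequence $t_j\to+\infty$ with $x(t_j)\to H$ for some $H\in\A$, and $R'_H=0$; continuity then yields $Q(X)=Q(H)=\tr\bigl((R'_H)^2\bigr)=0$. As $R'_X$ is a symmetric operator, $\tr\bigl((R'_X)^2\bigr)=0$ forces $R'_X=0$, first for every unit $X$ and then, by homogeneity of $(\nabla_X R)(\cdot,X)X$ in $X$, for every $X\in\s$.

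It remains to pass from $R'_X\equiv0$ to symmetry. Since $R'_u=0$ for every unit $u$ at every point, the Jacobi operator $R_{\gamma'_X(t)}$ is parallel along each geodesic, so $R_v^{(k)}=0$ for all $k\ge1$; in particular its eigenvalues are constant, i.e.\ $S$ is a $\C$-space, and every odd Ledger term $\tr C_v^{(2k+1)}(0)$ vanishes (each such term carries an odd total number of covariant derivatives, hence at least one derivative factor), so $S$ is D'Atri by Remark~\ref{rem:D'A iff Ledger}. Corollary~\ref{cor:IwaCproDAtri} then gives that $S$ is symmetric, after reducing to the irreducible case by splitting $S$ into its de Rham factors, each an Iwasawa type space inheriting both properties. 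Finally, (ii) follows from (i): Proposition~\ref{pro:L5} shows $L_3=L_5=0$ makes $\tr(R_X^2)$ invariant, Proposition~\ref{pro:L7} shows $L_3=L_5=L_7=0$ makes $\tr(32R_X^3-9R'_X\circ R'_X)$ invariant, and the hypothesis $\tr(R'_X\circ R_X^2)=0$ makes $\tr(R_X^3)$ invariant by the last computation in the proof of Theorem~\ref{th:k-D'A property}; thus the hypotheses of (i) hold and $S$ is symmetric. I expect the delicate point to be the limit step: one must be sure that $Q$ is genuinely a pointwise isometry invariant, so that $Q(\gamma'_X(t))$ can be read off from $x(t)\in\s$, and that continuity legitimizes passing $x(t_j)\to H$ inside $Q$; granting $R'_H=0$ for $H\in\A$, this is exactly what converts geodesic-flow invariance into the pointwise identity $R'_X=0$.
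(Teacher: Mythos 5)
Your argument for the core of (i) is the paper's own: from the two geodesic-flow invariants you extract invariance of $\tr(R'_X\circ R'_X)$, push it along a geodesic to the limit $x(t_j)\to H\in\A$ supplied by \eqref{eq:IwaGePro}, use $R'_H=0$ and continuity to get $\tr\bigl((R'_X)^2\bigr)=0$, and conclude $R'_X=0$ because $R'_X$ is symmetric. Your treatment of (ii) --- verifying the hypotheses of (i) via Propositions \ref{pro:L5} and \ref{pro:L7} together with the derivative formula for $\tr(R^3)$ along geodesics --- is a correct and essentially equivalent variant of the paper's, which instead extracts $\tr(R'_X\circ R''_X)=0$ directly from \eqref{eq:L7} and differentiates $\tr(R'\circ R')$ via Remark \ref{rem:Der2}.

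The one place you genuinely depart from the paper is the final implication $R'_X\equiv 0\Rightarrow S$ symmetric, and that is where your proposal has a gap. The paper disposes of this step by citing \cite{OS84}, where vanishing of $(\nabla_vR)(\cdot,v)v$ for all $v$ is shown to force $\nabla R=0$. You instead argue that parallel Jacobi operators make $S$ a $\C$-space and a D'Atri space and then invoke Corollary \ref{cor:IwaCproDAtri}; but that corollary is stated only for \emph{irreducible} $S$, and your reduction (``splitting $S$ into its de Rham factors, each an Iwasawa type space inheriting both properties'') is asserted rather than proved. It is not clear that a de Rham factor of an Iwasawa type space is again of Iwasawa type --- a flat factor, for instance, violates conditions (ii) and (iii) of the definition --- and Corollary \ref{cor:IwaCproDAtri} itself rests on Heber's structure theory, so this detour is both heavier and less secure than the pointwise curvature identity the paper quotes. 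Replacing this step by the citation to \cite{OS84} (or by a direct polarization/Bianchi argument that $R'_v=0$ for all $v$ implies local symmetry) completes the proof; everything else in your write-up, including the routine ``only if'' directions, is sound.
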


\begin{proof}\ Let $X\in {\frak{s}}$ be a unit vector.

\begin{itemize}
\item[(i)]  If ${\operatorname{tr}}(32R_{X}^{3}-9R_{X}^{\prime }\circ
R_{X}^{\prime })$ and ${\operatorname{tr}}(R_{X}^{3})$ are invariant under
the geodesic flow, then ${\operatorname{tr}}(R_{X}^{\prime }\circ
R_{X}^{\prime })$ is also invariant under the geodesic flow. That is,
\begin{equation*}
{\operatorname{tr}}(R_{\gamma _{X}^{\prime }(t)}^{\prime }\circ R_{\gamma
_{X}^{\prime }(t)}^{\prime })={\operatorname{tr}}(R_{X}^{\prime }\circ
R_{X}^{\prime })\quad \text{ for all }\;t\in {\mathbb R}.
\end{equation*}
Finally, taking lim as $t\rightarrow \infty $, from \eqref{eq:IwaGePro} and
the fact $R_{H}^{\prime }=0$ $(H\in \frak{a})$ we have that
\begin{equation*}
0={\operatorname{tr}}(R_{H}^{\prime }\circ R_{H}^{\prime })={
\operatorname{tr}}(R_{X}^{\prime }\circ R_{X}^{\prime }),\quad X\in {\frak{s}
},\,|X|=1.
\end{equation*}
Hence, $R_{X}^{\prime }=0$ because $\operatorname{tr}(R'_X\circ R'_X)=\sum_{i=1}^n |R'_X Y_i|^2=0$ implies $R'_X Y_i=0$ for all $i=1,\dots, n$ where $\{Y_i\}$ is a basis of $\frak{s}$. Consequently, $S$ is symmetric since $\nabla
R=0$ (see for example \cite{OS84} or \cite[p. 59]{B-V.92}).

\item[(ii)]  If $L_{3}=0$, $L_{5}=0$, $L_{7}=0$ and ${\operatorname{tr}}
(R_{X}^{\prime }\circ R_{X}^{2})=0$ for all unit vector $X\in {\frak{s}}$,
by \eqref{eq:L7} we get ${\operatorname{tr}}(R_{X}^{\prime }\circ
R_{X}^{\prime \prime })=0$. Thus, ${\operatorname{tr}}(R_{X}^{\prime }\circ
R_{X}^{\prime })$ is invariant under the geodesic flow and consequently, $S$
is symmetric (see the proof of $(i)$).
\end{itemize}
\end{proof}

Thus, from Remark \ref{rem:D'A iff Ledger} we characterize a special
subclass of D'Atri spaces of Iwasawa type using only the three first odd
Ledger's conditions.

\begin{corollary}
Let $S$ be a space of Iwasawa type. Then, $S$ is a symmetric space if and
only if $S$ is a D'Atri space and ${\operatorname{tr}}(R_{X}^{\prime }\circ
R_{X}^{2})=0$ for all unit vector $X\in {\frak{s}}$.

Equivalently, if $S$ is a D'Atri space of Iwasawa type, then $S$ is
symmetric if and only if ${\operatorname{tr}}(R_{X}^{3})$ is invariant under
the geodesic flow.
\end{corollary}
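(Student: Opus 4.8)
The plan is to derive this corollary by assembling Remark~\ref{rem:D'A iff Ledger}, Theorem~\ref{th:Ledger+Iwasawa}~(ii) and the equivalence recorded in Theorem~\ref{th:k-D'A property}; since the substantive work already resides in Theorem~\ref{th:Ledger+Iwasawa}, I do not expect any new computation to be needed.

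For the forward implication of the first equivalence I would simply recall that symmetric spaces are D'Atri spaces and, moreover, satisfy $R_{X}^{\prime }=0$; the latter makes the condition $\tr(R_{X}^{\prime }\circ R_{X}^{2})=0$ automatic for every unit vector $X\in {\frak{s}}$. For the reverse implication I would invoke Remark~\ref{rem:D'A iff Ledger}: because $S$ is D'Atri, the entire infinite series of odd Ledger conditions holds, so in particular $L_{3}=L_{5}=L_{7}=0$. Together with the hypothesis $\tr(R_{X}^{\prime }\circ R_{X}^{2})=0$, these are precisely the assumptions of Theorem~\ref{th:Ledger+Iwasawa}~(ii), which yields that $S$ is symmetric.

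To pass to the equivalent formulation I would use the identity, already established in the proof of Theorem~\ref{th:k-D'A property} via \cite[Lemma 2.3]{D10} and Remark~\ref{rem:Der1},
\begin{equation*}
\frac{d}{dt}\left( \tr R_{\gamma _{X}^{\prime }(t)}^{3}\right) =3\tr\left( R_{\gamma _{X}^{\prime }(t)}^{2}\circ R_{\gamma _{X}^{\prime }(t)}^{\prime }\right) .
\end{equation*}
Since $\left| \gamma _{X}^{\prime }(t)\right| =1$, this shows that $\tr(R_{X}^{3})$ is invariant under the geodesic flow if and only if $\tr(R_{X}^{2}\circ R_{X}^{\prime })=0$ along every geodesic, equivalently $\tr(R_{X}^{\prime }\circ R_{X}^{2})=0$ for every unit vector $X\in {\frak{s}}$. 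Substituting this equivalence into the first statement produces the ``Equivalently'' clause.

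The only genuine point to watch, and hence the main obstacle, is the logical bookkeeping rather than any analysis: I must confirm that the D'Atri hypothesis truly furnishes the first three odd Ledger conditions demanded by Theorem~\ref{th:Ledger+Iwasawa}~(ii), and that the invariance of $\tr(R_{X}^{3})$ under the geodesic flow is faithfully equivalent to the vanishing of $\tr(R_{X}^{\prime }\circ R_{X}^{2})$. Both are immediate from the cited results, so I anticipate that the verification will be routine and that the corollary follows with no further work.
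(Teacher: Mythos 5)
Your proposal is correct and follows essentially the same route the paper intends: the paper states this corollary as an immediate consequence of Remark~\ref{rem:D'A iff Ledger} (D'Atri $\Leftrightarrow$ all odd Ledger conditions, hence $L_3=L_5=L_7=0$) combined with Theorem~\ref{th:Ledger+Iwasawa}~(ii), and the ``Equivalently'' clause via the identity $\frac{d}{dt}\tr R_{\gamma_X'(t)}^3=3\tr(R_{\gamma_X'(t)}^2\circ R_{\gamma_X'(t)}')$ from \cite[Lemma 2.3]{D10}, exactly as you do. Your write-up merely makes explicit the bookkeeping the paper leaves implicit, and it is sound.
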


\begin{remark}
\begin{itemize}
\item[(i)]  In particular, if $S$ has algebraic rank one, the property ${
\operatorname{tr}}(R_{X}^{3})$ being invariant under the geodesic flow is a
necessary condition in the above corollary, since nonsymmetric Damek-Ricci
spaces do not satisfy such property by \cite{D06}.

\item[(ii)]  The property ${\operatorname{tr}}(R_{X}^{3})$ being invariant
under the geodesic flow in Iwasawa type ${\frak{C}}$-spaces and the previous
corollary give an alternative proof of Corollary \ref{cor:IwaCproDAtri}.
\end{itemize}
\end{remark}

Finally, applying the main results of the preceding section and Theorem \ref
{th:Ledger+Iwasawa} we get some stronger results than the previously
obtained in \cite{D10}.

It is known that the property of $S$ being a $k$-D'Atri space for all
$k=1,...,n-1$ characterizes the symmetric spaces of noncompact type within
the class of of Iwasawa type spaces. In particular, in the class of
Damek-Ricci spaces the symmetric of noncompact type and rank one are
characterized by the $3$-D'Atri condition (see \cite[Theorem 3.2]{D10}).
Now, we generalize this result to Iwasawa type spaces, where the symmetric
ones are those which are D'Atri and $3$-D'Atri spaces.

\begin{corollary}
\label{cor:Iwa3DAtri} Let $S$ be a D'Atri space of Iwasawa type. Then, $S$
is symmetric if and only if $S$ is a $3$-D'Atri space. In such a case, $S$ is a $k$
-D'Atri space for all $k=1,...,n-1.$
\end{corollary}

\begin{proof} \ Applying the equivalence between D'Atri, $1$-D'Atri and $
2 $-D'Atri properties and \cite[Proposition 2.4]{D10}, under the assumption
that $S$ is a $3$-D'Atri space it follows that ${\operatorname{tr}}
(R_{X}^{k})$, $k=1,2,3,$ and ${\operatorname{tr}}(32R_{X}^{3}-9R_{X}^{\prime
}\circ R_{X}^{\prime })$ are invariant under the geodesic flow (see Proposition \ref{pro:L7}). The
assertion is immediate by Theorem \ref{th:Ledger+Iwasawa}. \end{proof}

Finally, using Theorem \ref{th:k-D'A property} we get an stronger consequence of Theorem \ref{th:Ledger+Iwasawa}
than the previously obtained in Corollary \ref{cor:Iwa3DAtri}.

\begin{corollary}
\label{cor:kDAtriSym} Let $S$ be a D'Atri space of Iwasawa type of dimension
$n\geq 4$. Then, $S$ is symmetric if and only if $S$ is a $k$-D'Atri space
for some $k,$ $3\leq k\leq n-1$. In such a case, $S$ is a $k$-D'Atri for all $
k=1,...,n-1.$

In particular, if $S$ is Damek-Ricci then $S$ is a $k$-D'Atri space for some
$k\geq 3$ if and only if $S$ is a rank one symmetric space of noncompact
type.
\end{corollary}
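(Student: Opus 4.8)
The plan is to obtain the corollary by assembling the results already established, so that essentially no new computation is needed. Note first that since the $k$-D'Atri property is only defined for $1\leq k\leq n-1$, the phrase ``for some $k\geq 3$'' tacitly means ``for some $k$ with $3\leq k\leq n-1$'', which is consistent precisely when $n\geq 4$, matching the standing hypothesis.

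For the easy implication I would recall that a symmetric space is $k$-D'Atri for every admissible $k$: its geodesic symmetries are (global) isometries, hence preserve all the elementary symmetric functions $\sigma_k$ of the shape operators of small geodesic spheres. Since $n\geq 4$, there is at least one $k$ with $3\leq k\leq n-1$, so $S$ is $k$-D'Atri for some such $k$.

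The converse is where the earlier theorems carry the load. Assuming $S$ is $k$-D'Atri for some $k$ with $3\leq k\leq n-1$, I would first apply Theorem \ref{th:k-D'A is D'A} (valid since $k\geq 2$) to deduce that $S$ is a D'Atri space; by Remark \ref{rem:D'A iff Ledger} all odd Ledger conditions then hold, in particular $L_3=L_5=L_7=0$. Next, because $n\geq 4$ and $3\leq k\leq n-1$, Theorem \ref{th:k-D'A property} gives $\tr(R_X'\circ R_X^2)=0$ for every unit vector $X\in\frak{s}$. Having the three first odd Ledger conditions together with this trace identity, Theorem \ref{th:Ledger+Iwasawa}(ii) applies directly and forces $S$ to be symmetric. (Alternatively, one may invoke part (i): Theorem \ref{th:k-D'A property} provides invariance of $\tr(R_X^3)$ under the geodesic flow, and Proposition \ref{pro:L7} provides invariance of $\tr(32R_X^3-9R_X'\circ R_X')$, after which (i) concludes the proof.)

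For the Damek-Ricci statement I would argue by specialization. Damek-Ricci spaces are exactly the spaces of Iwasawa type of algebraic rank one, and among them the symmetric ones are precisely the rank one symmetric spaces of noncompact type; moreover every Damek-Ricci space has $n\geq 4$, so the general equivalence applies without further hypotheses and yields the asserted characterization. I do not expect a genuine obstacle in this corollary, since all the analytic work resides in the previous theorems; the only points needing care are verifying that the index and dimension constraints required by Theorem \ref{th:k-D'A property} are automatically met, and recalling the standard identification of symmetric Damek-Ricci spaces with the rank one noncompact symmetric spaces.
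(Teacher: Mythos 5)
Your proposal is correct and takes essentially the same route the paper intends: the paper gives no separate proof for this corollary, simply noting that it follows by combining Theorem \ref{th:k-D'A is D'A} (so $S$ is D'Atri and all odd Ledger conditions hold), Theorem \ref{th:k-D'A property} (so $\tr(R_X'\circ R_X^2)=0$), and Theorem \ref{th:Ledger+Iwasawa}(ii). Your checks on the index range $3\leq k\leq n-1$, the converse via isometric geodesic symmetries, and the rank-one specialization to Damek--Ricci spaces are exactly the routine verifications the paper leaves implicit.
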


\section{Geodesic symmetries and $k$-D'Atri spaces for all $k\geq 1$}

\label{sec:GCandk-D'Atri}

In this section, we characterize $k$-D'Atri spaces for all $k\geq 1$ as the $
\frak{SC}$-spaces and we show applications on 4-dimensional homogeneous
spaces. Recall that $M$ is a \emph{$\frak{SC}$-space} (a $\frak{SP}$\emph{
-space}) if for any small real $t>0$ and any unit vector $v\in T_{m}M$ the
eigenvalues (the eigenvectors) of $S_{v}(t),$ the shape operator of the
geodesic sphere $G_{m}(t)$ centered at $m,$ are preserved by the geodesic
symmetries $s_{m}$ for all $m\in M.$ See \cite{B-P-V} for definitions and
related notions.

By definition, $s_{m}$ preserves $S_{v}(t)$ if and only if
\begin{equation*}
\left. \text{d}s_{m}\right| _{\gamma _{v}(t)}\circ S_{v}(t)=S_{-v}(t)\circ
\left. \text{d}s_{m}\right| _{\gamma _{v}(t)}.
\end{equation*}
Moreover, the following proposition is well known in the literature and we
include it here for the sake of completeness.

\begin{proposition}
\label{pro:Known}The geodesic symmetries $s_{m}$ preserve the shape
operators $S_{v}(t)$ of small geodesic $G_{m}(t)$ if and only if $S_{v}(t)$
and $\left. \text{d}s_{m}\right| _{\gamma _{v}(t)}^{-1}\circ S_{-v}(t)\circ
\left. \text{d}s_{m}\right| _{\gamma _{v}(t)}$ are simultaneously
diagonalizable and have the same eigenvalues. Equivalently, for each small
real $t>0,$ $S_{v}(t)$ and $S_{-v}(t)$ have a basis of eigenvectors $
\{X_{i}(t):i=1,...,n-1\}$ and $\{\left. \text{d}s_{m}\right| _{\gamma
_{v}(t)}X_{i}(t):i=1,...,n-1\},$ respectively, with the same associated
eigenvalues. This in turn means that $M$ is a $\frak{SC}$-space and a $
\frak{SP}$-space.
\end{proposition}

The following result characterizes those Riemannian manifolds which are $k$
-D'Atri for all $k=1,...,n-1.$ Thus, we complete Theorem 2.6 of \cite{D10}.

\begin{theorem}
\label{th:k-D'A is GC} $M$ is a $n$-dimensional D'Atri space of type $k$ for
all $k=1,\dots ,n-1$ if and only if $M$ is a ${\frak{SC}}$-space.
\end{theorem}

\begin{proof} \ We fix a real number $t>0$ and a unit vector $v\in
T_{m}M. $ The hypothesis $\sigma _{k}(v,t)=\sigma _{k}(-v,t)$ for all $k=1,...,n-1$ implies that both characteristic polynomial of $S_{v}(t)$ and $
S_{-v}(t)$ are coincident. That is,
\begin{equation*}
\det \left( \lambda \text{Id}-S_{v}(t)\right) =\det \left( \lambda \text{Id}
-S_{-v}(t)\right)
\end{equation*}
and consequently, $S_{v}(t)$ and $S_{-v}(t)$ have the same set of
eigenvalues, counted with multiplicities.

The converse is immediate since by definition of $\frak{SC}$-space, $\sigma
_{k}(v,t)=\sigma _{k}(-v,t)$ for any small $t>0$ (see the expression of
$\sigma _{k}(v,t)$ in terms of the eigenvalues given in Section \ref{sec:intro}).
\end{proof}

On the other hand, Characterization 1.3 of \cite{B-P-V} proved by
A.~J.~Ledger and L. Vanhecke in \cite{LV} describes the locally symmetric
spaces ($\nabla R=0)$ as those whose local geodesic symmetries preserve the
shape operators of small geodesic spheres. Thus, we need a stronger
condition to assure when a $k$-D'Atri space for all $k=1,...,n-1$ (or a $
\frak{SC}$-space) is locally symmetric. In fact, from Proposition \ref
{pro:Known}, we have the following consequence.

\begin{corollary}
\label{cor:SymSP} Let $M$ be a $k$-D'Atri space for all $k=1,...,n-1.$
Then, $M$ is locally symmetric if and only if for each small real $t>0,$ the
geodesic symmetries $s_{m}$ preserve a basis of eigenvectors and the
associated eigenvalues of the shape operators $S_{v}(t)$ of small geodesic
spheres centered at $m$ for all $m\in M.$ That is, $M$ is locally symmetric
if and only if $M$ is a $\frak{SP}$-space (or it is $\frak{P}$-space, by
Theorem 3.2 of \cite{B-P-V} since $M$ is real analytic).
\end{corollary}

Now, we focus our attention in some applications. It is known that the
notions of $1$-D'Atri (D'Atri) space and $2$-D'Atri space are equivalent
(see \cite{D10}) although it is still open the question if they are also
equivalent to the $k$-D'Atri space notion for each $k,$ $3\leq k\leq n-1$.
Note that this is the case in the class of D'Atri spaces of Iwasawa type as
we show in Corollary \ref{cor:Iwa3DAtri}. Now, we will solve this problem
for the case of $4$-dimensional homogeneous Riemannian spaces.

\begin{corollary}
\label{cor:4dimDAtri} Let $M$ be a $4$-dimensional homogeneous Riemannian
space. If $M$ is a D'Atri space, then $M$ is a $k$-D'Atri space for all $k=1,2,3$.
Conversely, if $M$ is a $k$-D'Atri space for some $k=1,2,3$, then $M$ is a D'Atri space. In
particular, $M$ is a D'Atri space if and only if $M$ is a $3$-D'Atri space.
\end{corollary}

\begin{proof}
By Proposition \ref{pro:2.2D10} of Section \ref{sec:k-DAtri},
we know that every $k$-D'Atri space for some $k\geq 1$, in
particular every $3$-D'Atri space, satisfies at least the two first odd
Ledger conditions. In \cite{AM-Ko.08} has been proved that every $4$
-dimensional homogeneous Riemannian space that satisfies the two first odd
Ledger conditions is necessary naturally reductive and consequently it is a
D'Atri space. Moreover, all of them are conmutative since $\dim M\leq 5$
(see \cite[p. 10]{B-Tr-V}).

On the other hand, in Proposition 4.7 and Proposition 4.8 of \cite{B-P-V}
was proved that every $4$-dimensional Riemannian space that is naturally
reductive is a $\frak{TC}$-space and that in the class of commutative
spaces, ${\frak{SC}}$-spaces and $\frak{TC}$-spaces form the same subclass.
Therefore, every $4$-dimensional homogeneous D'Atri space is a ${\frak{SC}}$
-space and by Theorem \ref{th:k-D'A is GC} it is a $k$-D'Atri space for all $
k=1,2,3$. In particular, every D'Atri space is a $3$-D'Atri space.
\end{proof}

Moreover, using the equivalence between the properties of $M$ being a D'Atri
($1$-D'Atri) space or a $2$-D'Atri space we also have,

\begin{corollary}
Let $M$ be a $4$-dimensional homogeneous Riemannian space. $M$ is a $k$-D'Atri space for all $k=1,2,3$ if and only if $M$ is a $3$-D'Atri space.
\end{corollary}

\begin{remark}
\emph{
4-dimensional homogeneous Riemannian spaces provide examples of $k$-D'Atri
spaces for all $k=1,2,3$ which are not symmetric and consequently, by
Corollary \ref{cor:SymSP} they are neither $\mathfrak{SP}$-spaces nor $
\mathfrak{P}$-spaces. See in \cite{AM-Ko.08} the case 2 of Proposition 2,
the cases 1 and 2 of Proposition 5 and the case 4 of Proposition 6. All of them belong to the case ii) of the Classification Theorem of \cite{AM-Ko.08}. Thus, all of them are locally isometric to a Riemannian product $M^3\times \mathbb{R}$, where $M^3$ is naturally reductive. Thus, these examples are locally isometric to naturally reductive homogeneous spaces.}
\end{remark}

We finish this section with the following proposition that relates $\frak{C}$
-spaces and geodesic symmetries that preserve eigenvalues of Jacobi
operators.

\begin{proposition}\label{pro:C-cha}
$M$ is a $\frak{C}$-space if and only if the geodesic symmetries $s_{m}$
preserve the eigenvalues of Jacobi operators $R_{\gamma _{v}^{\prime }(t)}$
along the geodesics $\gamma _{v}(t),$ for all $m\in M$ and all unit vector $
v\in T_{m}M.$
\end{proposition}

\begin{proof}
We fix $m\in M$ and let $v\in T_{m}M$ be a unit vector. Note
we consider $R_{\gamma _{v}^{\prime }(t)}$ as $\left. R_{\gamma _{v}^{\prime
}(t)}\right| _{\gamma _{v}^{\prime }(t)^{\perp }}$ whenever $\gamma _{v}(t)$
is defined. Assume that $s_{m}$ preserves the eigenvalues of $R_{\gamma
_{v}^{\prime }(t)},$ which means that $\left. R_{\gamma _{v}^{\prime
}(t)}\right| _{\gamma _{v}^{\prime }(t)^{\perp }}$and $\left. R_{\gamma
_{-v}^{\prime }(t)}\right| _{\gamma _{-v}^{\prime }(t)^{\perp }}=\left.
R_{\gamma _{v}^{\prime }(-t)}\right| _{\gamma _{v}^{\prime }(-t)^{\perp }}$
have the same eigenvalues $\lambda _{v}(t)=\lambda _{-v}(t),$ respectively,
at $\gamma _{v}(t)$ and $\gamma _{-v}(t)=\gamma _{v}(-t)$. Thus, for any
possible real number $t$%
\begin{equation*}
\text{tr}R_{\gamma _{v}^{\prime }(t)}^{k}=\text{tr}R_{\gamma _{-v}^{\prime
}(t)}^{k}\text{ for all natural }k\geq 1.
\end{equation*}
Taking derivatives, as functions of real $t,$ we get
\begin{equation*}
\text{tr}\left( R_{\gamma _{v}^{\prime }(t)}^{k-1}\circ R_{\gamma
_{v}^{\prime }(t)}^{\prime }\right) =\text{tr}\left( R_{\gamma _{-v}^{\prime
}(t)}^{k-1}\circ R_{\gamma _{-v}^{\prime }(t)}^{\prime }\right) \text{ for
all }k\geq 1,
\end{equation*}
that evaluated at $t=0$ gives
\begin{equation*}
\text{tr}\left( R_{v}^{k-1}\circ R_{v}^{\prime }\right) =\text{tr}\left(
R_{-v}^{k-1}\circ R_{-v}^{\prime }\right) =-\text{tr}\left( R_{v}^{k-1}\circ
R_{v}^{\prime }\right) \text{ for all }k\geq 1,
\end{equation*}
since $R_{-v}^{\prime }=-R_{v}^{\prime }.$ Hence,
\begin{equation*}
\text{tr}\left( R_{v}^{k-1}\circ R_{v}^{\prime }\right) =0\text{ for }
k=1,...,n-1.
\end{equation*}
This fact implies that $\lambda _{v}^{\prime }(0)=0$ for all unit vector $v.$
By considering the above equation at $\gamma _{v}^{\prime }(t)$ ($\left|
\gamma _{v}^{\prime }(t)\right| =1$), we have that $\lambda _{v}^{\prime
}(t)=0$ for all $t.\,$ Thus, the eigenvalues of $R_{\gamma _{v}^{\prime
}(t)} $ are constant functions of $t$ (see [15, Theorem 2.6] or [7, Theorem
3]); that is, $M$ is a $\frak{C}$-space.

The converse is immediate.
\end{proof}


\begin{thebibliography}{99}
\bibitem{AM}  T. Arias-Marco, \emph{Study of homogeneous D'Atri spaces of
the Jacobi operator on g.o. spaces and the locally homogeneous connections
on 2-dimensional manifolds with the help of \textsc{Mathematica}}$^{
\scriptstyle\mathbf{\copyright }}$. Dissertation, Universitat de Val\`{e}
ncia, Valencia, Spain (2007) ISBN: 978-84-370-6838-1,
http://www.tdx.cat/TDX-0911108-110640.

\bibitem{AM.07}  T. Arias-Marco, \emph{The classification of 4-dimensional
homogeneous D'Atri spaces revisited}, \textrm{Differential Geom. Appl.}
\textbf{25} (2007), 29--34.

\bibitem{AM09}  T. Arias-Marco, \emph{Some curvature conditions of
Riemannian manifolds. Ledger's conditions and Jacobi osculating rank}.
Lambert Academic Publishing, Saarbr\"{u}cken, Germany, 2009.

\bibitem{AM-Ko.08}  T. Arias-Marco, O. Kowalski, \emph{The classification of
4-dimensional homogeneous D'Atri spaces}, \textrm{Czechoslovak Math. J.}
\textbf{58(133)} (2008), 203--239.

\bibitem{B-P-V}  J. Berndt, F. Pr\"{u}fer, L. Vanhecke, \emph{Symmetric-like
Riemannian manifolds and geodesic symmetries}, \textrm{Proc. Roy. Soc.
Edinburgh Sect. A} \textbf{125} (1995), 265--282.

\bibitem{B-Tr-V}  J. Berndt, F. Tricerri, L. Vanhecke, \emph{Generalized
Heisenberg groups and Damek-Ricci harmonic spaces}, Lecture Notes in
Mathematics 1598, Springer-Verlag, Berlin/Heidelberg/New York, 1995.

\bibitem{B-V.92}  J. Berndt, L. Vanhecke, \emph{Two natural generalizations
of locally symmetric spaces}, \textrm{Differential Geom. Appl.} \textbf{2}
(1992), 57--80.

\bibitem{Be}  A.~L. Besse, \emph{Manifolds all of whose geodesics are closed}
, \textrm{Ergebnisse der Mathematik und ihrer Grenzgebiete 93},
Springer-Verlag, Berlin/New York, 1978.

\bibitem{B}  N. Bourbaki, \emph{Algebra II (Chapters 4-7)}, Springer Verlag,
Berlin 1990.

\bibitem{CV}  B.~Y. Chen, L. Vanhecke, \emph{Differential geometry of
geodesic spheres}, \textrm{J. Reine Angew. Math.} \textbf{325} (1981),
28--67.

\bibitem{D'A-N.69}  J.~E. D'Atri, H.~K. Nickerson, \emph{Divergence
preserving geodesic symmetries}, \textrm{J.~Diff.\ Geom.} \textbf{3} (1969),
467--476.

\bibitem{D01}  M.~J. Druetta, \emph{${\frak{C}}$-spaces of Iwasawa type and
Damek-Ricci spaces}, \textrm{Contemporary Mathematics} \textbf{288} (2001),
315--319.

\bibitem{D06}  M.~J. Druetta, \emph{Carnot spaces and the $k$-stein condition
}, \textrm{Advances in Geometry} \textbf{6} (2006), 439--465.

\bibitem{D09}  M.~J. Druetta, \emph{D'Atri spaces of Iwasawa type}, \textrm{
Differential Geom. Appl.} \textbf{27 (5)} (2009), 653--660.

\bibitem{D10}  M.~J. Druetta, \emph{Geometry of D'Atri spaces of type k},
\textrm{Ann. Glob. Anal. Geom.} \textbf{38} (2010), 201--219.

\bibitem{H95}  J. Heber, \emph{Homogeneous spaces of nonpositive curvature
and their geodesic flow}, \textrm{International J. of Math.} \textbf{6}
(1995), 279--296.

\bibitem{H98}  J. Heber, \emph{Noncompact homogeneous Einstein spaces\textrm{
,} }\textrm{Inventiones Math. }\textbf{133} (1998), 279--352.

\bibitem{H06}  J. Heber, \emph{On harmonic and asymtotically harmonic
homogeneous spaces}, \textrm{Geom. Funct. Anal.} \textbf{16} (2006),
869--890.

\bibitem{Ko.83}  O. Kowalski, \emph{Spaces with volume-preserving symmetries
and related classes of Riemannian manifolds}, \textrm{Rend. Sem. Mat. Univ.
Politec. Torino}, Fascicolo Speciale (1983), 131--158.

\bibitem{KPV}  O. Kowalski, F. Pr\"{u}fer, L. Vanhecke, \emph{D'Atri spaces}
, \textrm{Prog. Nonlinear Differential Equations Appl.} \textbf{20} (1996),
241--284.

\bibitem{LV}  A. J. Ledger, L. Vanhecke, \emph{Symmetries and locally
s-regular manifolds}, \textrm{Ann. Global Anal. Geom.} \textbf{5} (1987),
151--160.

\bibitem{OS84}  R. Osserman, P. Sarnak, \emph{A new curvature invariant and
entropy of geodesic flows}, \textrm{Invent. math.} \textbf{77} (1984),
455--462.

\bibitem{P-T}  H. Pedersen, P. Tod, \emph{The Ledger curvature conditions
and D'Atri geo\-metry}, \textrm{Differential Geom. Appl.} \textbf{11}
(1999), 155--162.

\bibitem{Sz.93}  Z.~I. Szab\'{o}, \emph{Spectral theory for operator
families on Riemannian manifolds}, \textrm{Proc. Symp. Pure Math.} \textbf{54
}, 3 (1993), 615--665.\bigskip

\end{thebibliography}
\end{document}